\documentclass[a4paper,10pt]{amsart}
\usepackage[utf8x]{inputenc}

\usepackage{amsmath,amsthm,amsfonts,amssymb, mathdots}
\usepackage{epsfig, psfrag}
\usepackage{graphicx,graphics}
\usepackage{latexsym}
\usepackage[all]{xy}
\usepackage{subfigure}
\usepackage[usenames,dvips]{color}  
\usepackage{color} 
\usepackage{psfrag}  

\xyoption{matrix}
\xyoption{arrow}

%
%
%
%
\theoremstyle{plain}
\newtheorem{lemma}{Lemma}[section]
\newtheorem{prop}[lemma]{Proposition}
\newtheorem{cor}[lemma]{Corollary}
\newtheorem{thm}[lemma]{Theorem}

\newtheorem{de}[lemma]{Definition}

\newtheorem{rem}[lemma]{Remark}
\newtheorem{nota}[lemma]{Notation}

\newcommand{\Z}{\mathbb Z}

\newcommand{\N}{\mathbb N}
\newcommand{\X}{\mathbb X}

\frenchspacing
\begin{document}
\title{Repetitive higher cluster categories of type $A_n$}

\author[Lamberti]{Lisa Lamberti}
\address{Departement Mathematik\\
ETHZ\\
Rämistrasse 101
8092 Zürich \\
Schweiz
}
\email{lisa.lamberti@math.ethz.ch}


\begin{abstract}
We show that the repetitive higher cluster category of type $A_n$,
defined as the orbit category 
$\mathcal{D}^b(\mathrm{mod}k A_n)/(\tau^{-1}[m])^p$,
is equivalent to a category defined on a subset of diagonals in a regular polygon.
This generalizes the construction of Caldero-Chapoton-Schiffler, \cite{CCS}, which we recover 
when $p=m=1$, and the work of Baur-Marsh, \cite{BM}, treating the case $p=1, m>1$.
Our approach also leads to a geometric model of the bounded derived category 
in type $A$.
\end{abstract}

\maketitle

\section{Introduction}

In this paper we give a geometrical-combinatorial model, in 
the spirit of Caldero-Chapoton-Schiffler, \cite{CCS},
of repetitive higher cluster categories of type $A_n$. These are orbit categories 
of the bounded derived category of $\mathrm{mod}k A_n$ under the action of the cyclic group 
generated by the auto-equivalence 
$(\tau^{-1}[m])^p$, where $kA_n$ is the path algebra associated
to a Dynkin quiver of type $A_n$, $\tau$ is the AR-translation and 
$[m]$ the composition of the shift functor $[1]$ on $\mathcal{D}^b(\mathrm{mod}k A_n)$ with itself $m$-times 
and $1\leq p \in \mathbb{N}$. 
We write $$\mathcal{C}^m_{n,p}:=\mathcal{D}^b(\mathrm{mod}k A_{n})/<(\tau^{-1}[m])^p>.$$
The class of objects of $\mathcal{C}^m_{n,p}$ is the same as for $\mathcal{D}^b(\mathrm{mod}k A_{n})$
and the space of morphisms is as follows
$$
\mathrm{Hom}_{\mathcal{C}^m_{n,p}}(X,Y)=
\bigoplus_{i\in\Z} \mathrm{Hom}_{\mathcal{D}^b(\mathrm{mod}k A_n)}(X, (\tau^{-p}[pm])^i Y).
$$
When the index $p$ or $m$ is equal to one we omit it in the writing of $\mathcal{C}^m_{n,p}$.

When $p=m=1$ one recovers the usual cluster categories, which we denote simply by $\mathcal{C}_n$,
defined independently in \cite{CCS} for the case $A_n$ and in the general case in \cite{BMRRT}.
If $p=1$ and $m>1$ one regains the higher cluster
categories $\mathcal{C}^m_n$ defined in \cite{Keller}, also called $m$-cluster categories.
For $\mathcal{C}^m_n$ Baur-Marsh gave a geometric model in \cite{BM}, 
and in \cite{BM2} for type $D_n$ .
In the case $p>1$ and $m=1$, the category is simply called repetitive cluster category $\mathcal{C}_{n,p}$, 
studied also by Zhu in \cite{Zhu} from a purely algebraic point of view.

The main results of the paper are the following.
On one side we are able to give an equivalence of 
categories between $\mathcal{C}^m_{n,p} $ and a category defined
on a subset of all the diagonals in a regular $p((n+1)m+1)$-gon $\Pi^p$. 
Then, the model we propose here also leads to
a geometric interpretation of cluster tilting objects in $\mathcal{C}^m_{n,p}$.
Furthermore, for $m=1$ we are able to prove an equivalence of triangulated categories between
$\mathcal{C}_{n,p} $ and a quotient of a cluster category of a certain rank.
On the other hand as an application of the results obtained for $\mathcal{C}^m_{n,p}$,
a geometric model for $\mathcal{D}^b(\mathrm{mod}k A_{n-1})$ will be given.

The association of geometric models to algebraic categories has been
studied and developed by many authors, among others we mention: \cite{BM},  
\cite{BM2}, \cite{BZ}, \cite{BuTo}, \cite{CCS}, \cite{Tork},\cite{S}, $\dots$.
This approach is not only beautiful but also fruitful
as it gives new ways to understand the intrinsic combinatorics of the category.

The particularity of the repetitive higher cluster categories is that they
are fractionally Calabi-Yau of dimension $\frac{p(m+1)}{p}$, 
this means that $(\tau[1])^p\cong[p(m+1)]$ as triangle functors,
and the fraction cannot be simplified.
And it is precisely in this point that the category
$\mathcal{C}^m_{n,p}$ differs from $\mathcal{C}_n$ and $\mathcal{C}^m_n$.

This generalization of the notion of a Calabi-Yau category
is interesting because many categories happen to be of this type. Consider
for example the bounded derived category of $\mathrm{mod} kQ$, for a quiver $Q$ with underlying Dynkin diagram, 
or of the category of coherent sheaves on an elliptic curve or a 
weighted projective line of tubular type. 
A.-C. van Roosmalen was recently able to give a classification up to 
derived equivalence of abelian hereditary categories whose bounded
derived category are fractionally Calabi-Yau, see \cite{Va}. 

The structure of the paper is as follows. The next section is dedicated
to a review of useful definitions on fractionally Calabi-Yau categories and 
repetitive cluster categories will be carefully defined. In Section 3 we define 
the repetitive polygon $\Pi^p$ and we choose a subset of diagonals in $\Pi^p$
together with a rotation rule between them. This will lead to the modelling of
$\mathcal{C}_{n,p}$.
In Section 4 we study the relation between repetitive cluster categories
and cluster categories.
Section 5 is dedicated to the study of cluster tilting objects in $\mathcal{C}_{n,p}$.
The content of Section 6 is the geometric modelling of $\mathcal{C}^m_{n,p},$
the construction we give here generalizes the one of Section 3.
Finally, in Section 7 we extend the construction of $\Pi^p$ to a geometric
figure with an infinite number of sides, $\Pi^{\pm \infty}$. 
Applying the results of
Section 7 to the category generated by the set of diagonals complementary to $2$-diagonals in
$\Pi^{\pm \infty}$, we obtain a model for 
$\mathcal{D}^b(\mathrm{mod}k A_{n+1})$.

I would like to thank my advisor Prof. K. Baur for the inspiring discussions we had. 
I am also grateful to Prof. R. Marsh for useful discussions.

\section{Repetitive cluster categories of type $A_n$}

\subsection{Serre duality and Calabi-Yau categories}

Let $k$ be a field (assume it to be algebraically closed, even though in this section it is not needed) 
and let $\mathcal{K}$ be a $k$-linear triangulated category which is $\mathrm{Hom}$-finite,
i.e.\ for any two objects in $\mathcal{T}$ the space of morphisms is a finite dimensional vector space.
\begin{nota}
Throughout the paper we denote by $\mathcal{D}$ the category $\mathcal{D}^b(\mathrm{mod}kA_{n})$ 
or $\mathcal{D}^b(\mathrm{mod} kA_{n-1})$, and we believe that it will
be clear from the context if it is one or the other. 
\end{nota}

\begin{de}
A $k$-triangulated category $\mathcal{K}$ has a {\em Serre functor} if
it is equipped with an auto-equivalence
$\nu:\mathcal{K}\rightarrow\mathcal{K}$ together with bifunctorial isomorphisms
$$
D\mathrm{Hom}_{\mathcal{K}}(X,Y)\cong\mathrm{Hom}_{\mathcal{K}}(Y,\nu X),
$$
for each $X,Y\in\mathcal{K}$. $D$ indicates the vector space duality $\mathrm{Hom}_{k}(?,k)$.
\end{de}
We will say that $\mathcal{K}$ has Serre duality if $\mathcal{K}$ admits a Serre functor.
In the case $\mathcal{K}=\mathcal{D}$ a Serre functor exists (\cite{K2} p. 24), 
it is unique up to isomorphism
and $\nu\stackrel {\sim}{\rightarrow}\tau[1]$, where $\tau$ is the Auslander-Reiten translate and
$[1]$ the shift functor of $\mathcal{D}$.

\begin{de}
A {\em triangle functor} between two triangulated categories $\mathcal{J}$ and $\mathcal{K}$
is a pair $(F,\sigma)$ where $F:\mathcal{J}\rightarrow\mathcal{K}$ is a $k$-linear functor 
and $\sigma:F[1]\rightarrow[1] F$
an isomorphism of functors such that the image of a triangle 
in $\mathcal{J}$ under $F$ is a triangle in $\mathcal{K}$.

Suppose $(F,\sigma)$ and $(G,\gamma)$ are triangle functors, then a {\em morphism of triangle functors}
is a morphism of functors $\alpha:F\rightarrow G$ such that the square
$$
\xymatrix{
F[1] \ar[d]^{\alpha[1]}\ar[r]^{\sigma}  & [1] F \ar[d]^{[1]\alpha} \\
G[1] \ar[r]^{\gamma} & [1] G }
$$
commutes.
\end{de}

\begin{de}
One says that a category $\mathcal{K}$ with Serre functor $\nu$ is a {\em fractionally Calabi-Yau category 
of dimension $\frac{m}{n}$} or {\em $\frac{m}{n}$-Calabi-Yau}
if there is an isomorphism of triangle functors: $$\nu^n\cong[m],$$ 
for $n,m>0$, and where $[m]$ indicates the composition of the shift 
functor with itself $m$ times.
\end{de}

\begin{rem} Notice
that a category of fractional CY dimension $\frac{m}{n}$ is also
of fractional CY dimension $\frac{m t}{nt}$, $t\in\Z$. 
However the converse is not always true.
\end{rem}

\subsection{Repetitive cluster categories of type $A_n$}
In the following we give the algebraic description 
of the repetitive cluster category of type $A_n$. This is the orbit category
of the bounded derived category of $\mathrm{mod} kA_n$ 
under the action of the cyclic group generated
by the auto-equivalence $(\tau^{-1}[1])^p=\tau^{-p}[p]$ for $p>0$, where
$\tau$ is the AR-translation in $\mathcal{D}$ and 
$[1]$ the shift functor. Repetitive cluster categories were defined in \cite{Zhu} as orbit categories of  
$\mathcal{D}^b(\mathcal{H})$, for $\mathcal{H}$ a hereditary abelian category with tilting objects.

\begin{de}\label{defrepcat}
The {\em repetitive cluster category} 
$$\mathcal{C}_{n,p}:=\mathcal{D}/<\tau^{-p}[p]>$$
of type $A_n$,
has as class of objects the same as in $\mathcal{D}$.
The class of morphism is given by:
$$
\mathrm{Hom}_{\mathcal{C}_{n,p}}(X,Y)=
\bigoplus_{i\in\Z} \mathrm{Hom}_{\mathcal{D}}(X, (\tau^{-p}[p])^i Y)
$$
\end{de}
\noindent
Observe that when $p=1$, one gets back the usual cluster category which we simply denote by $\mathcal{C}_n$.
Furthermore, one can define the projection functor $\eta_p:\mathcal{C}_{n,p}\rightarrow \mathcal{C}_n$ which
sends an object $X$ in $\mathcal{C}_{n,p}$ to an object $X$ in $\mathcal{C}_n$, and
$\phi:X\rightarrow Y$ in $\mathcal{C}_{n,p}$ to the morphism $\phi:X\rightarrow Y$ in $\mathcal{C}_{n}$, \cite{Zhu}.
Then one has that $\pi_1=\pi_p\circ\eta_p$, where $\pi_p:\mathcal{D}\rightarrow \mathcal{C}_{n,p}$.

In the following let $\mathcal{F}=\mathcal{F}_1$ be the fundamental domain for the functor
$F:=\tau^{-1}[1]$ in
$\mathcal{D}$
given by the isoclasses of
indecomposables objects in $\mathrm{mod}(kA_n)$ together with the $[1]$-shift of
the projective indecomposable modules. 
After \cite[Proposition: 1.6]{BMRRT} one can identify the subcategory
of isomorphism classes of indecomposable objects of $\mathcal{C}_n$, 
denoted by $\mathrm{ind}(\mathcal{C}_n)$, with
the objects in $\mathcal{F}$.
Let $F^k:=F\circ\dots\circ F$, $k$-times, then denote by
$\mathcal{F}_k$ the $F^k$-shift of $\mathcal{F}$ and we can draw the 
fundamental domain for the functor $\tau^{-p}[p]$ as
in Figure \ref{funddom}.

\begin{figure}[h]
 \unitlength=1cm
   \begin{center}
      \psfragscanon
      \psfrag{1}{$\mathcal{F}$}
      \psfrag{2}{$\mathcal{F}_2$}
\psfrag{4}{$\mathcal{F}_{p-1}$}
\psfrag{3}{$\mathcal{F}_p$}
\psfrag{5}{$\dots$}
\centering
\includegraphics[width=0.7\textwidth]{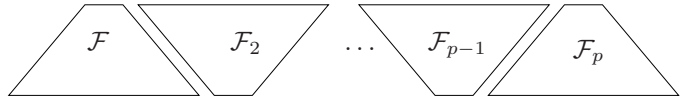} 
   \end{center}
\caption{Partition of the fundamental domain of $\tau^{-p}[p]$ for an odd value of $p$}\label{funddom}
\end{figure} 

As next we summarize some basic properties of $\mathcal{C}_{n,p}$ proven in \cite[Proposition: 3.3]{Zhu}.

\begin{lemma}\label{ZhuProp: 3.3}
\begin{itemize}
 \item $\mathcal{C}_{n,p}$ is a triangulated category with AR-triangles
 and Serre functor $\nu:=\tau[1]$.
 \item The projections $\pi_p:\mathcal{D}\rightarrow \mathcal{C}_{n,p}$
 and $\eta_p:\mathcal{C}_{n,p}\rightarrow \mathcal{C}_n$ are triangle functors. 
 \item $\mathcal{C}_{n,p}$ is fractionally CY of dimension $\frac{2p}{p}$.
 \item $\mathcal{C}_{n,p}$ is a Krull-Schmidt category.
 \item $\mathrm{ind}(\mathcal{C}_{n,p})=\bigcup_{i=1}^{p} \mathrm{ind}(\mathcal{F}_i)$.
\end{itemize}
\end{lemma}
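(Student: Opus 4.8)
The plan is to establish the five assertions by transporting structure along the projection $\pi_p\colon\mathcal{D}\to\mathcal{C}_{n,p}$, the essential input being Keller's theorem on triangulated orbit categories. Write $\Phi=\tau^{-p}[p]=F^p$. The heart of the argument, and the only genuinely delicate point, is the first claim that $\mathcal{C}_{n,p}$ carries a triangulated structure for which $\pi_p$ is a triangle functor, since orbit categories of triangulated categories are not triangulated in general. I would verify that $\Phi$ satisfies Keller's hypotheses. Because $A_n$ is Dynkin, $\mathcal{D}=\mathcal{D}^b(\mathrm{mod}kA_n)$ is the derived category of a representation-finite hereditary algebra, its indecomposables form the repetitive quiver $\Z A_n$, and both $\tau$ and $[1]$ are standard auto-equivalences, hence so is $\Phi$. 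The $\Phi$-orbit of each indecomposable is a discrete ``diagonal'' in $\Z A_n$ meeting the abelian heart $\mathrm{mod}kA_n$ in only finitely many points, which is precisely the finiteness condition Keller requires. This yields the triangulated structure on $\mathcal{C}_{n,p}$ together with the fact that $\pi_p$ is a triangle functor, giving the first half of the second bullet.

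For the Serre functor and AR-triangles, I would argue that $\nu=\tau[1]$ descends. Since $\nu$ is built from $\tau$ and $[1]$, it commutes with $\Phi$ up to isomorphism, so it induces an auto-equivalence of $\mathcal{C}_{n,p}$; the Serre duality isomorphisms on $\mathcal{D}$ are compatible with the direct-sum decomposition defining the morphism spaces of $\mathcal{C}_{n,p}$ and therefore pass to the quotient. Once $\mathcal{C}_{n,p}$ is a Hom-finite, Krull-Schmidt triangulated category with a Serre functor, the existence of AR-triangles is automatic by the Reiten-Van den Bergh criterion. The second triangle functor $\eta_p$ arises because $\langle\Phi\rangle=\langle F^p\rangle$ is a subgroup of $\langle F\rangle$, so $\mathcal{C}_n=\mathcal{D}/\langle F\rangle$ is itself the orbit category of $\mathcal{C}_{n,p}$ by the induced order-$p$ auto-equivalence $\overline{F}$; the compatibility of the two Keller constructions makes $\eta_p$ a triangle functor, and the relation $\pi_1=\pi_p\circ\eta_p$ recorded above then holds at the level of triangle functors.

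The fractional Calabi-Yau dimension is a direct computation in the quotient: by construction of $\mathcal{C}_{n,p}$ one has $\Phi=\tau^{-p}[p]\cong\mathrm{id}$, hence $\tau^p\cong[p]$, and therefore
\[
\nu^p=(\tau[1])^p=\tau^p[p]\cong[p][p]=[2p],
\]
which is the asserted dimension $\tfrac{2p}{p}$. For the Krull-Schmidt property I would first check Hom-finiteness: in
\[
\mathrm{Hom}_{\mathcal{C}_{n,p}}(X,Y)=\bigoplus_{i\in\Z}\mathrm{Hom}_{\mathcal{D}}(X,\Phi^iY),
\]
representation-finiteness forces all but finitely many summands to vanish and each summand is finite dimensional, so the total is finite dimensional. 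As $\mathcal{D}$ is Krull-Schmidt and the orbit category inherits $k$-linearity and idempotent completeness, a Hom-finite, idempotent-complete $k$-linear additive category is Krull-Schmidt.

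Finally, for the description of the indecomposables I would use that $\mathcal{F}$ is a fundamental domain for $F$, so by \cite[Proposition: 1.6]{BMRRT} the indecomposables of $\mathcal{C}_n$ are identified with $\mathcal{F}$. The indecomposables of $\mathcal{C}_{n,p}$ are the $\Phi$-orbits of indecomposables of $\mathcal{D}$, and since $\Phi=F^p$, a fundamental domain for $\Phi$ is obtained by stacking the $p$ consecutive $F$-translates $\mathcal{F},\mathcal{F}_2,\dots,\mathcal{F}_p$ as in Figure \ref{funddom}; each $\Phi$-orbit meets this region exactly once, yielding $\mathrm{ind}(\mathcal{C}_{n,p})=\bigcup_{i=1}^{p}\mathrm{ind}(\mathcal{F}_i)$. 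The main obstacle throughout is the triangulated structure: once Keller's hypotheses are verified, the remaining four assertions amount to bookkeeping transported along $\pi_p$.
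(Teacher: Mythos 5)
The paper offers no proof of this lemma at all: it is explicitly presented as a summary of properties ``proven in \cite[Proposition: 3.3]{Zhu}'', so the paper's entire argument is a citation. Your proposal is therefore different in the only sense available --- you reconstruct the argument the paper outsources --- and your reconstruction follows what is essentially the standard (and, as far as one can tell, Zhu's own) line: Keller's theorem on triangulated orbit categories for the triangulated structure and for $\pi_p$, descent of the Serre functor $\tau[1]$ along the projection together with the Reiten--Van den Bergh criterion for AR-triangles, the computation $\nu^p=\tau^p[p]\cong[2p]$ for the Calabi--Yau dimension, Hom-finiteness plus representation-finiteness for Krull--Schmidt, and the fundamental-domain argument in the style of \cite[Proposition: 1.6]{BMRRT} for the description of $\mathrm{ind}(\mathcal{C}_{n,p})$. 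What the paper's citation buys is brevity; what your version buys is a self-contained account whose only external input is \cite{Keller}. The strategy is sound and the computations you do carry out (Serre duality descent, the CY exponent, Hom-finiteness) are correct.

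Three steps would need patching before this is a complete proof. First, Keller's theorem has \emph{two} hypotheses on the auto-equivalence $\Phi=\tau^{-p}[p]$: besides the condition you check (each indecomposable of the heart has only finitely many $\Phi$-translates in the heart), one must verify that there is a uniform bound $N$ such that every $\Phi$-orbit of an indecomposable of $\mathcal{D}$ contains some $U[i]$ with $U$ a module and $0\leq i\leq N$; this holds here (with $N$ on the order of $p$), but it is a separate check you omit. Second, the assertion that ``the orbit category inherits \dots idempotent completeness'' is not a valid general principle --- failure of idempotent completeness (and of triangulatedness) of orbit categories is precisely the subtlety that Keller's paper exists to address, so nothing is ``inherited'' for free. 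The standard repairs are either to show directly that the image of each indecomposable of $\mathcal{D}$ has local endomorphism ring (only finitely many of the spaces $\mathrm{Hom}_{\mathcal{D}}(X,\Phi^iX)$ are nonzero, and an endomorphism is invertible exactly when its degree-zero component is), which yields Krull--Schmidt with no mention of idempotents, or to invoke the fact that idempotents split in a Hom-finite triangulated $k$-category, which is legitimate only \emph{after} the triangulated structure has been established. Third, your argument for $\eta_p$ treats $\mathcal{C}_n$ as the orbit category of $\mathcal{C}_{n,p}$ by the induced finite-order auto-equivalence $\overline{F}$ and appeals to ``compatibility of the two Keller constructions''; but Keller's theorem cannot be applied to that second quotient, since $\mathcal{C}_{n,p}$ is not the bounded derived category of a hereditary category and quotienting by a finite-order auto-equivalence is exactly the situation where naive orbit categories misbehave. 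The clean argument is that both $\mathcal{C}_{n,p}$ and $\mathcal{C}_n$ are defined at the dg level, and the inclusion $\langle F^p\rangle\subset\langle F\rangle$ induces the functor $\eta_p$ on dg orbit categories, whence it is a triangle functor; this is the route taken in \cite{Zhu}.
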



\section{Geometric model of $\mathcal{C}_{n,p}$ }

The geometric model for $\mathcal{C}_{n,p}$ we present in this paper can be viewed as
a $p$-covering of the polygon (which we view as a disc) modelling $\mathcal{C}_{n}$, \cite{CCS}. 
However, the cover we propose here differs
from the usual understanding of a covering space. In fact,the nature of $\mathcal{C}_{n,p}$ suggests
a connection between the first and last layer of the covering. This is the reason why the different ``layers'' 
in our model arise inside a disc as shown in \ref{polrep}.

\subsection{The repetitive polygon $\Pi^p$}\label{sectionreppolygon}
Let $p>1$.
For the purpose of the geometric characterization of $\mathcal{C}_{n,p}$
let $\Pi$ be a regular $N:=n+3$-gon and
let $\Pi^p$ be a regular $p(n+2)$-gon. Number the  
vertices of $\Pi^p$ clockwise repeating
$p$-times the $N-$tuple $1,2,...,N-1, N$ and letting correspond $N\equiv 1.$
Then we denote by $\Pi_1$ a region homotopic to $\Pi$ inside $\Pi^p$ delimited by the segments 
$(1,2),(2,3),\dots,(N-1,N)$ and the inner arc $(1,N)$.
 
Denote by $\rho:\Pi^p\rightarrow\Pi^p$ 
the clockwise rotation through $\frac{2\pi}{p}$ around the center
of $\Pi^p$, and set $\Pi_k:=\rho^{k-1} (\Pi_1)$ for $1\leq k \leq p$. In this way
we divide $\Pi^p$ into $p$ regions. 
See Figure \ref{polrep} where we illustrate this construction. 
\begin{de}\label{defbiggon} 
We call {\em diagonals} of $\Pi^p$ the union of all diagonals of
$$\Pi_1\cup\Pi_2\cup\dots\cup\Pi_{p}.$$
\end{de}
Denote the diagonals of $\Pi^p$ by the triple $(i,j,k)$,
where $1 \leq k\leq p$ specifies a region $\Pi_k$ inside $\Pi^p$, and the tuple
$(i,j)$ defines the diagonal in $\Pi_k$. 

Notice that for us the set of diagonals of $\Pi^p$ consists of a subset of all the straight inner lines
(drawn as arcs)
joining the vertices of $\Pi^p$. Furthermore, the arcs $(1,N,k)$ for $1 \leq k\leq p$ are not diagonals of
$\Pi^p$ as they correspond to boundary segments of $\Pi_k$.
\begin{nota} Observe once and for all that in the writing $(i,j,k)$
we understand that the index $k$ has to be taken modulo $p$, and the indices $i,j$ modulo $N$. 
Furthermore, we always assume that $i<j$.
\end{nota}

\begin{figure}[h]
 \unitlength=1cm
   \begin{center}
      \psfragscanon
      \psfrag{tex1}{$\Pi_1$}
      \psfrag{tex2}{$\Pi_2$}
\psfrag{tex3}{$\Pi_3$}
\centering
\includegraphics[width=0.3 \textwidth]{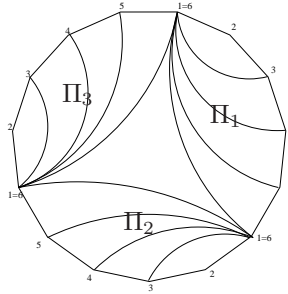} 
   \end{center}
\caption{The polygon $\Pi^3$ for $n=3$}\label{polrep}
\end{figure}

\subsection{Quiver of diagonals of $\Pi^p$} 
As next we associate a translation quiver to the diagonals of $\Pi^p$
with the intention of modelling the AR-quiver of the category $\mathcal{C}_{n,p}.$
The first part of the next definition goes back to \cite{CCS}, the second part is new
and essential for the modelling of the repetitive cluster category.

In the following we call a rotation of a diagonal around a fixed vertex 
{\em irreducible} whenever the other vertex of the diagonal moves 
to the preceding or successive vertex.

\begin{de}\label{derot}
Let $\Gamma_{n,p}$ be the quiver whose vertices are the diagonals $(i,j,k)$
of $\Pi^p$, and whose arrow are defined as follows.
\begin{description}
 \item [1.] For $1\leq i< j<N$: 
$$
{
\xymatrix@-8mm{
 & & (i,j+1,k)\\
\ar[rru]\ar[rrd]  
(i,j,k)\\
 & & (i+1,j,k) 
}}
$$
That is, we draw an arrow if there is an irreducible clockwise rotation around the vertex $j$  or $i$
inside $\Pi_k$, 
\item[2.] For $1\leq i< j=N$ we link
$(i,N,k)\rightarrow (i+1,N,k)$ whenever there is an irreducible  clockwise rotation
around the vertex $N$ inside $\Pi_k$
\item [3.] For $1\leq i< j=N$ we link 
$(i,N,k)\rightarrow (1,i,k+1)$. That is, we draw an arrow to the diagonal
we can reach with a clockwise rotation around vertex $i$ inside $\Pi_k$ 
composed with $\rho$. I.e.\ $(i,N,k)$
is linked to $\rho(1,i,k)=(1,i,k+1)$. 
\end{description}
The set of the operations $1.$ and $2.$ are denoted by {\em $\mathrm{ IrrRot}$}
and the operations $3.$ by {\em $\mathrm{Irr}\rho\mathrm{Rot}$}.
\end{de}
Notice that $\Gamma_{n,p}$ lies on a M\"{o}bius strip when $p$ is odd, and on a cylinder
when the value of $p$ is even.

As next we equip $\Gamma_{n,p}$ with a translation
$
\tau:\Gamma_{n,p}\rightarrow\Gamma_{n,p}
$
such that $(\Gamma_{n,p},\tau)$ becomes a stable translation quiver in the sense of Riedtmann, \cite{Riedt}.

For the reader familiar with the article of \cite{CCS} we point out that the first part of the following 
definition agrees with the one given there, however
the second is new and can be thought of as a connecting map between the different regions $\Pi_k$ in $\Pi^p$.
\begin{de}
The {\em translation $\tau$} on $\Gamma_{n,p}$ maps
\begin{itemize}
 \item $(i,j,k)$ to $(i-1,j-1,k)$ for $ 1<i,j\leq N$ and $1\leq k\leq p$, i.e.\
       $\tau$ is an anticlockwise rotation around the center of $\Pi^p$ by $\frac{2\pi}{p(n+2)}$.
 \item  $(1,j,k)$ to $(j-1,N,k-1)$, i.e. the effect of $\tau$ on $(1,j,k)$ is induced by 
$\frac{1}{N}$th anticlockwise rotation in the regular $N$-gon $\Pi$ homotopic to
$\Pi_{k}$, composed  with $\rho^{-1}$. 
That is $$(1,j,k)\mapsto(j-1,N,k)\mapsto(j-1,N,k-1)=\rho^{-1}(j-1,N,k).$$
\end{itemize}
\end{de}

\begin{lemma}\label{stabletransquiv}
The pair $(\Gamma_{n,p},\tau)$ is a stable translation quiver.
\end{lemma}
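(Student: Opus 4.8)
The plan is to verify the two defining conditions of a stable translation quiver in the sense of Riedtmann \cite{Riedt}: first that $\tau$ is a bijection on the vertex set of $\Gamma_{n,p}$, and second that it satisfies the mesh condition, i.e.\ that for every diagonal $z$ the arrows ending at $z$ are in bijection with the arrows starting at $\tau z$. Since between any two diagonals there is at most one arrow, and $\Gamma_{n,p}$ has finitely many vertices and no loops (every arrow strictly changes the diagonal), the mesh condition reduces to the set equality $\{\text{predecessors of } z\}=\{\text{successors of }\tau z\}$.

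First I would check that $\tau$ is a well-defined bijection. The candidate inverse is the clockwise rotation $\tau^{-1}$ sending $(i,j,k)\mapsto(i+1,j+1,k)$ when $j<N$ and $(i,N,k)\mapsto(1,i+1,k+1)$. I would verify that both $\tau$ and $\tau^{-1}$ send diagonals to diagonals — the only point needing care is that the boundary arcs $(1,N,k)$ must never occur as an image — and that the two maps are mutually inverse; a direct substitution gives $\tau(1,i+1,k+1)=(i,N,k)$ and $\tau^{-1}(j-1,N,k-1)=(1,j,k)$, which settles bijectivity.

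The core is the mesh condition, which I would establish by a case analysis on the position of $z=(i,j,k)$, distinguishing $i>1$ from $i=1$ and $j<N$ from $j=N$. In the generic interior case $i>1$, $j<N$ one has $\tau z=(i-1,j-1,k)$, and both the predecessors of $z$ and the successors of $\tau z$ turn out to be exactly $(i-1,j,k)$ together with $(i,j-1,k)$, the latter present only when $(i,j-1)$ is still a genuine diagonal, i.e.\ $j\ge i+3$; hence the two sets coincide. The case $j=N$, $i>1$ is entirely analogous, now using only the operations of type $\mathrm{IrrRot}$ from parts 1 and 2 of Definition \ref{derot}. Throughout, the degenerate short diagonals $j=i+2$ and the extreme ones $j=N-1$ merely drop one predecessor and its matching successor simultaneously, so the equality persists.

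The delicate case, and the one I expect to be the main obstacle, is the boundary case $z=(1,j,k)$, where $\tau z=(j-1,N,k-1)$ crosses into the adjacent region $\Pi_{k-1}$ and where the operations of type $\mathrm{Irr}\rho\mathrm{Rot}$ (part 3) come into play. Here I would compute that the predecessors of $(1,j,k)$ are $(1,j-1,k)$, present when $j\ge 4$, and $(j,N,k-1)$, present when $j\le N-2$, whereas the successors of $\tau z=(j-1,N,k-1)$ are, via parts 2 and 3 respectively, $(j,N,k-1)$ and $(1,j-1,k)$ under the same two constraints. Checking that these descriptions agree — in particular that the single $\mathrm{Irr}\rho\mathrm{Rot}$-arrow out of $\tau z$ lands precisely on the remaining predecessor of $z$ — is exactly the step where the region index $k$ must be tracked carefully, and it is what forces $(\Gamma_{n,p},\tau)$ to close up consistently across the boundaries between the regions $\Pi_k$. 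Assembling all cases then yields the mesh condition for every vertex, so the pair is a stable translation quiver.
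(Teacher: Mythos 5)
Your proof is correct and takes essentially the same approach as the paper: both reduce the stable-translation-quiver condition, using that there is at most one arrow between any two diagonals, to a case-by-case check that arrows ending at a vertex correspond to arrows starting at its translate, with the decisive computation being the cross-region case $z=(1,j,k)$, $\tau z=(j-1,N,k-1)$ involving the $\mathrm{Irr}\rho\mathrm{Rot}$ arrows of Definition \ref{derot}. The only differences are minor: you supply an explicit inverse to establish bijectivity of $\tau$ and verify the generic same-region case directly, whereas the paper declares bijectivity clear and cites Proposition 2.2 of \cite{BM} for that case.
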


\begin{proof}
It is clear that the map $\tau$ is bijective. As $\Gamma_{n,p}$ is finite, 
we only need to persuade us that
the number of arrows from a diagonal $D$ to $D'$ is equal to the number of
arrows from $\tau D'$ to $D$. As there is at most one arrow between any two diagonals, 
we only have to check that
there is an arrow from $D$ to $D'$ if and only if there is an arrow from $\tau D'$ to $D$.

For a given diagonal $D=(i,j,k)$ we distinguish two possible cases. Either there is an arrow to
a diagonal $D'$ of $\Pi_k$ to a diagonal of the neighbouring region $\Pi_{k+1}$.
\begin{itemize}
\item In the first case we distinguish depending on whether $i=1$, or $i\neq 1$.
If $i\neq1$ the result follows from Proposition 2.2 in \cite{BM}.
If $i=1$, then
$$
{\xymatrix@-8mm{
 & & (1,j+1,k)\\
\ar[rru]\ar[rrd]  
(1,j,k)\\
 & & (2,j,k) 
}}
$$ 
Then $\tau(1,j+1,k)=(j,N,k-1),$ and 
$(j,N,k-1)\rightarrow (1,j,k)$. Similarly $\tau(2,j,k)=(1,j-1,k)$ and  $(1,j-1,k)\rightarrow(1,j,k)$.
\item If there is an arrow $(i,j,k)\rightarrow(i',j',k+1)$, we deduce that 
$$j=N,i'=1,\textrm{ and }j'=i\neq 1.$$ Then
$\tau(1,i,k+1)=(i-1,N,k)$, and we deduce that $$(i-1,N,k)\rightarrow(i,N,k).$$
\end{itemize}
\end{proof}

\subsection{Category of diagonals in $\Pi^p$}
We now associate an additive category $\mathcal{C}(\Pi^p)$ to the diagonals of $\Pi^p$. The new
feature arising here is that we only allow a subset of all
possible diagonals of $\Pi^p$, and that we deal with an additional type 
of rotation.

As next we remind the reader the definition of mesh category.
This will be used repeatedly in the next sections.

Let $\alpha$ denote the arrow $x\rightarrow y$, then $\sigma(\alpha)$
denotes the arrow $\tau(y)\rightarrow x$.
\begin{de}
The {\em mesh category} of a translation quiver $(Q,\tau)$ is the
factor category of the path category of $Q$ modulo the ideal generated by the {\em mesh relations}
$$
r_v:=\sum_{\alpha:u\rightarrow v}\alpha\cdot\sigma(\alpha),
$$
where the sum is over all arrows ending in $v$, and $v$ runs through the vertices of $Q$.
\end{de}
Then the category of diagonals  $\mathcal{C}(\Pi^p)$ in $\Pi^p$ 
arises as the mesh category of $(\Gamma_{n,p},\tau)$.
More specifically, the class of objects 
of $\mathcal{C}(\Pi^p)$ is given by formal direct sums of the diagonals in $\Pi^p$, 
i.e.~diagonals in the regions 
$\Pi_k$ for $1\leq k\leq p$. 
The class of morphisms
is generated by the two rotations $\mathrm{IrrRot}_{\Pi^p}$ and 
$\mathrm{Irr}\rho\mathrm{Rot}_{\Pi^p}$, 
carefully defined
in Definition \ref{derot}, modulo the mesh relations which can be read off 
from $(\Gamma_{n,p},\tau)$.

Note that with our approach a new geometric type of mesh relation appears. 
It arises between diagonals of two consecutive regions: $\Pi_k$ and $\Pi_{k+1}$.

\subsection{Equivalence of categories}

The next results will show that the category $\mathcal{C}(\Pi^p)$ 
of diagonals in $\Pi^p$ is equivalent
to the repetitive cluster category of type $A_n$. 
The notation is as specified after Lemma  \ref{ZhuProp: 3.3}.

\begin{prop}\label{bijvarphi}
There is a bijection
$$
\varphi:X\mapsto D_X,
$$
from $\mathrm{ind}(\mathcal{C}_{n,p})$ to the diagonals of $\Pi^p$ such that:
\begin{itemize}
\item $\mathrm{Irr}_{\mathcal{C}_{n,p}}(X,Y)
\leftrightarrow
\mathrm{IrrRot}_{\Pi^p}(D_X,D_Y)$ if $ X, Y $ are in $\mathcal{F}_i$, for some $i$.
\item $\mathrm{ Irr}_{\mathcal{C}_{n,p} } (X,Y)
\leftrightarrow
\mathrm{Irr}\rho\mathrm{Rot}_{\Pi^p}(D_X,D_Y)$ if $X\in\mathcal{F}_i$ and $Y\in\mathcal{F}_j,$ $i\neq j$.
\end{itemize}
\end{prop}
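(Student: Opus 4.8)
The plan is to assemble $\varphi$ from the classical Caldero--Chapoton--Schiffler correspondence, copy by copy, and then to exploit the symmetry relating the auto-equivalence $F=\tau^{-1}[1]$ on the algebraic side to the rotation $\rho$ on the geometric side. By Lemma~\ref{ZhuProp: 3.3} we have $\mathrm{ind}(\mathcal{C}_{n,p})=\bigcup_{k=1}^{p}\mathrm{ind}(\mathcal{F}_k)$, while by Definition~\ref{defbiggon} the diagonals of $\Pi^p$ are the disjoint union of the diagonals of $\Pi_1,\dots,\Pi_p$. On the first layer, \cite{CCS} together with the identification $\mathrm{ind}(\mathcal{C}_n)\cong\mathcal{F}$ of \cite[Proposition 1.6]{BMRRT} supplies a bijection $\varphi_1\colon\mathrm{ind}(\mathcal{F}_1)\to\{\text{diagonals of }\Pi_1\}$. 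Since $F^{k-1}$ carries $\mathcal{F}_1$ onto $\mathcal{F}_k$ and $\rho^{k-1}$ carries the diagonals $(i,j,1)$ of $\Pi_1$ onto the diagonals $(i,j,k)$ of $\Pi_k$, I would define $\varphi$ on the remaining layers by the rule $\varphi(F^{k-1}X):=\rho^{k-1}\varphi_1(X)$, so that $\varphi$ intertwines the two actions, $\varphi\circ F=\rho\circ\varphi$.

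First I would record bijectivity. Each restriction $\varphi|_{\mathcal{F}_k}=\rho^{k-1}\circ\varphi_1\circ F^{-(k-1)}$ is a bijection onto the diagonals of $\Pi_k$, and these domains and codomains partition $\mathrm{ind}(\mathcal{C}_{n,p})$ and the diagonals of $\Pi^p$ respectively; hence $\varphi$ is a bijection. As a consistency check, each region $\Pi_k$ is homotopic to an $(n+3)$-gon and therefore carries $\tfrac{n(n+3)}{2}$ diagonals (the inner boundary arc $(1,N,k)$ not being one of them), which is exactly the number of indecomposables of one fundamental domain; the cardinalities match layer by layer.

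The remaining task, matching irreducible morphisms with rotations, I would reduce to the first layer. The functor $F$ descends to an auto-equivalence of $\mathcal{C}_{n,p}$ and hence preserves irreducible morphisms, while $\rho$ is a symmetry of $\Pi^p$ preserving both $\mathrm{IrrRot}$ and $\mathrm{Irr}\rho\mathrm{Rot}$; since $\varphi\circ F=\rho\circ\varphi$, it suffices to analyse irreducible morphisms whose source lies in $\mathcal{F}_1$. If the target also lies in $\mathcal{F}_1$, these are precisely the irreducible maps of the cluster category $\mathcal{C}_n$ inside a single fundamental domain, which by \cite{CCS} and \cite[Proposition 2.2]{BM} correspond to the clockwise irreducible rotations $\mathrm{IrrRot}$ in $\Pi_1$; transporting by $\rho^{i-1}$ gives the first bullet for every layer $\mathcal{F}_i$.

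The genuinely new and hardest point is the second bullet. I would first argue that an irreducible morphism out of $\mathcal{F}_1$ can only reach the adjacent layer $\mathcal{F}_2$: in the AR-quiver $\mathbb{Z}A_n$ of $\mathcal{D}$ the arrows leaving the fundamental domain $\mathcal{F}_1$ all issue from the $[1]$-shifted projectives lying on its boundary and land in the module part of the next copy, and forming the orbit category by $\langle\tau^{-p}[p]\rangle$ creates no further arrows, so no irreducible map bridges non-adjacent regions. The crux is then to match these boundary-crossing arrows with the operation $\mathrm{Irr}\rho\mathrm{Rot}$ of Definition~\ref{derot}(3): under $\varphi_1$ the boundary $[1]$-shifted projectives of $\mathcal{F}_1$ are the diagonals $(i,N,1)$ incident to the seam vertex $N\equiv1$, and an irreducible map from such a diagonal should terminate at $\rho(1,i,1)=(1,i,2)$. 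Verifying this amounts to computing, via the explicit CCS labelling, the targets in $\mathcal{F}_2$ of the arrows emanating from the projective shifts, and checking they are precisely the diagonals prescribed by the rule $(i,N,k)\rightarrow(1,i,k+1)$. The translation $\tau$ has already been shown to be compatible across the seam at the quiver level in Lemma~\ref{stabletransquiv}, which ensures this identification is consistent with the mesh structure; the main effort will be to make the correspondence between the shift $[1]$ on projectives and the vertex rotation composed with $\rho$ completely explicit.
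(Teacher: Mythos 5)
Your proposal is correct and follows essentially the same route as the paper: decompose $\mathrm{ind}(\mathcal{C}_{n,p})$ into the layers $\mathcal{F}_i$ via Lemma~\ref{ZhuProp: 3.3}, match each layer with the diagonals of the region $\Pi_i$ by the Caldero--Chapoton--Schiffler correspondence, and identify the cross-layer irreducible morphisms (which issue from the shifted projectives and land on the images of projectives in the next layer) with the arrows of part 3.\ of Definition~\ref{derot}. The only differences are organizational: the paper invokes the CCS bijection on every layer directly instead of transporting $\varphi_1$ by the equivariance $\varphi\circ F=\rho\circ\varphi$, and it settles the final check you defer in one line, by noting that under the CCS labelling a shifted projective of layer $k$ corresponds to $(i,N,k)$ while its irreducible target $Y\cong(\tau^{-1}[1])^{k}R$, $R$ projective, corresponds to $(1,i,k+1)$, which is exactly the arrow prescribed by Definition~\ref{derot}.
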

\begin{proof} We saw in Lemma \ref{ZhuProp: 3.3} that 
$$\mathrm{ind}(\mathcal{C}_{n,p})=\bigcup_{i=1}^{p} \mathrm{ind}(\mathcal{F}_i).$$
Then one easily sees that $\varphi$ is a bijection on the level of objects. In fact, we can apply
\cite[Corollary 4.7]{CCS}  to every
subcategory $\mathrm{ind}(\mathcal{F}_i)$ of $\mathcal{C}_{n,p}$
and to the corresponding region $\Pi_{i}$ in $\Pi^p$, $1\leq i \leq p.$

Then for each $1\leq i\leq p$, the irreducible morphisms
between objects $X,Y$ in $\mathcal{F}_i$ where $X\not\cong (\tau^{-1}[1])^{i-1}P[1]$ for $P$
some projective, or where 
$
X\cong (\tau^{-1}[1])^{i-1}P[1]$ and $Y\cong (\tau^{-1}[1])^{i-1}Q[1], 
$
for projectives $P$ and $Q$, agree by 
\cite[Theorem 5.1]{CCS} with the rotations $\mathrm{ IrrRot}_{\Pi_{i} } (D_X,D_Y)$ 
described in part 1. and 2. of Definition \ref{derot}.

Therefore, we only need to study the 
correspondence between $\mathrm{ Irr}_{\mathcal{C}_{n,p} } (X,Y)$ 
and the rotations in $\Pi^p$ described in 
part $3.$ of Definition \ref{derot} for 
$X\cong ((\tau^{-1}[1])^{i-1})\widetilde{P}[1]$ and 
$Y\not\cong (\tau^{-1}[1])^{i-1}\widetilde{Q}[1], 1\leq i\leq p.$
Thus, assume $\mathrm{ Irr}_{\mathcal{C}_{n,p} } (X,Y)\neq 0,$
in this case it follows from the shape of
the AR-quiver of  $\mathcal{C}_{n,p}$ 
that $Y\cong(\tau^{-1}[1])^{i}R,$ for some projective $R$.
Under the bijection $\varphi$ applied to the objects $X$ and $Y$
one then has:
$$ X  \mapsto (i,N,k),\hspace{0,5cm} Y   \mapsto (1,i,k+1).$$
From
Definition \ref{derot} it then follows 
that there is an arrow $(i,N,k)\rightarrow(1,i,k+1)$ in $\Gamma_{n,p}$
defining a corresponding operation in $\mathrm{Irr}\rho\mathrm{Rot}_{\Pi^p}(D_X,D_Y)$ .
Similarly one shows the other direction.
This proves that the mapping $\varphi$ is a bijection also on the level of
morphisms.  

It only remains to check
that the mesh relations in the two categories agree. But by the above we have precisely showed that 
the AR-quiver of $\mathcal{C}_{n,p}$ and 
$\Gamma_{n,p}$ are isomorphic, hence the mesh relations in both categories
agree.
\end{proof}

It follows from the previous result that the projection functor
$\eta_p:\mathcal{C}_{n,p}\twoheadrightarrow\mathcal{C}_n$ corresponds to the
projection $\mu_p:\Pi^p\rightarrow \Pi$.

\section{$\mathcal{C}_{n,p}$ and the link to the cluster category}

The desire of comparing the category $\mathcal{C}_{n,p}$ with the cluster
category of type $A_t$ 
for a certain $t$ arose while 
studying the geometrical model of $\mathcal{C}_{n,p}$. 
We will see in this section that the connection is slightly different than expected, as $t$ cannot simply be
deduced from the size of $\Pi^p$. This is because 
the class of morphisms in $\mathcal{C}_{n,p}$ is too particular. 

However, starting with
a bigger polygon modelling the cluster category, the category $\mathcal{C}_{n,p}$ appears whenever
we assume that $p\neq2.$ In fact, otherwise 
the diagonals of the two models overlap hence the description degenerates.

In the following denote by $(\Gamma_{n,p},\tau_{n,p})$ 
the AR-quiver of $\mathcal{C}_{n,p}$ 
and by $(\Gamma_t,\tau_t)$ the AR-quiver of $\mathcal{C}_t$.

\begin{lemma}\label{lemmasub} Let $p>2.$
Then $(\Gamma_{n,p},\tau_{n,p})$ is a subquiver of $(\Gamma_t,\tau_t)$ for a suitable value of $t$, namely
\begin{itemize} 
	\item  $t:=(n+3)(\frac{p}{2})-3$, if $p$ is even,
	\item  $t:=(n+3)p-3$, if $p$ is odd.
\end{itemize}
\end{lemma}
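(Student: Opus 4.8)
We need to show that the AR-quiver $(\Gamma_{n,p},\tau_{n,p})$ of the repetitive cluster category embeds as a (full) subquiver of the AR-quiver $(\Gamma_t,\tau_t)$ of an ordinary cluster category $\mathcal{C}_t$, where $t$ depends on the parity of $p$ via the stated formulas. By the equivalence established in Proposition \ref{bijvarphi}, $\Gamma_{n,p}$ is exactly the quiver of diagonals in the repetitive polygon $\Pi^p$, so everything can be argued geometrically. Let me think about this carefully.

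The plan is to work entirely on the geometric side, identifying $(\Gamma_t,\tau_t)$ with the Caldero-Chapoton-Schiffler quiver of diagonals in a single regular polygon $\Pi_t$ with $t+3$ vertices (by \cite{CCS}, applied to $\mathcal{C}_t$), and then exhibiting an explicit embedding of the diagonals of $\Pi^p$ into the diagonals of $\Pi_t$. First I would record the vertex counts: $\Pi^p$ is a regular $p(n+2)$-gon, and $\Pi_t$ is a regular $(t+3)$-gon. Substituting the claimed values of $t$, one gets $t+3 = (n+3)(p/2)$ for $p$ even and $t+3 = (n+3)p$ for $p$ odd. I would verify these are the correct sizes by recalling the remark just after Definition \ref{derot}: $\Gamma_{n,p}$ lies on a cylinder when $p$ is even and on a M\"obius strip when $p$ is odd. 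The ordinary cluster-category quiver $\Gamma_t$ lies on a M\"obius strip; a M\"obius strip covers a cylinder two-to-one, which is precisely why the even case needs only half the ``perimeter'' ($p/2$ copies worth) while the odd case needs the full $p$ copies. This parity dichotomy is the conceptual heart of the statement.

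The key steps, in order, are as follows. (i) Identify $\Gamma_t$ with the diagonal-quiver of $\Pi_t$ via \cite[Theorem 5.1]{CCS}. (ii) Construct an injective map $\Psi$ sending each diagonal $(i,j,k)$ of $\Pi^p$ to a genuine diagonal of $\Pi_t$; concretely one unrolls the $p$ stacked regions $\Pi_1,\dots,\Pi_p$ of $\Pi^p$ onto consecutive vertex-blocks of $\Pi_t$, taking into account the identification $N\equiv 1$ that glues successive regions. The vertex-count bookkeeping is exactly what forces the $-3$ correction in $t$ and, in the even case, the halving. (iii) Check that $\Psi$ carries the two arrow-types $\mathrm{IrrRot}$ and $\mathrm{Irr}\rho\mathrm{Rot}$ of Definition \ref{derot} to honest irreducible rotations inside $\Pi_t$: arrows of type 1 and 2 (rotation within a region) map to ordinary \cite{CCS}-rotations, and arrows of type 3 (the connecting map $(i,N,k)\to(1,i,k+1)$ crossing from $\Pi_k$ to $\Pi_{k+1}$) become, under the unrolling, again ordinary irreducible rotations in the larger polygon -- this is the point where the new repetitive arrows get absorbed into the standard combinatorics. (iv) Check compatibility with the translations, i.e. $\Psi\circ\tau_{n,p}=\tau_t\circ\Psi$ on the image, using that $\tau_{n,p}$ is an anticlockwise one-step rotation of $\Pi^p$ (Definition after Lemma \ref{stabletransquiv}) and $\tau_t$ is the analogous rotation of $\Pi_t$.

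The main obstacle will be step (iii)--(iv) at the seams between regions, precisely where the hypothesis $p>2$ is used. I expect the subtlety to be the following: when $p=2$ the two halves of the cover wrap around and the images of diagonals from $\Pi_1$ and $\Pi_2$ would collide (``the diagonals of the two models overlap,'' as flagged in the section introduction), so $\Psi$ fails to be injective and the translation fails to commute; for $p>2$ there is enough room in $\Pi_t$ to keep the blocks disjoint. I would therefore isolate a lemma verifying that, under the stated vertex counts with $p>2$, distinct diagonals of $\Pi^p$ never map to the same diagonal of $\Pi_t$ and that no spurious diagonal of $\Pi_t$ is hit by a boundary arc $(1,N,k)$ of $\Pi^p$. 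Once injectivity and the arrow/translation compatibility are in hand, $\Psi$ realizes $(\Gamma_{n,p},\tau_{n,p})$ as a subquiver of $(\Gamma_t,\tau_t)$ closed under neither $\tau_t$ nor its inverse in general -- but the statement only asserts subquiver of translation quivers, so matching the arrows and the partially-defined translation on the image suffices, and the parity-dependent formula for $t$ drops out of the vertex count.
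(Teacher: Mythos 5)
Your conceptual framing (M\"obius strip versus cylinder, the double-wrap explaining the parity dichotomy in $t$) is the right intuition, but the concrete construction you hang everything on --- unrolling the regions $\Pi_1,\dots,\Pi_p$ onto consecutive vertex-blocks of a $(t+3)$-gon $\Pi_t$, so that diagonals map to diagonals and the connecting arrows of type 3 become ordinary rotations --- does not exist, because the embedding of $\Gamma_{n,p}$ into $\Gamma_t$ is not induced by any map on vertices of polygons. Counting already rules it out: for $p$ even, $p$ blocks of $n+3$ consecutive vertices, successive ones sharing a vertex, would need $p(n+2)$ vertices, while $\Pi_t$ has only $(n+3)\frac{p}{2}$; the blocks must therefore overlap heavily and distinct diagonals of $\Pi^p$ collide (take $n=2$, $p=4$, $t=7$: the blocks $\{1,\dots,5\}$, $\{5,\dots,9\}$, $\{9,10,1,2,3\}$, $\{3,\dots,7\}$ of the decagon make the diagonal $(3,5)$ the image of two different diagonals of $\Pi^4$). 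For $p$ odd the count is looser, but the $p$ blocks consume $p(n+2)$ of the $p(n+3)$ vertices and hence fail to close up cyclically: there is an offset of $p$ vertices at the seam, so the connecting arrows from $\Pi_p$ back to $\Pi_1$ cannot be irreducible rotations between the proposed images. One can also compute what the true image must be (it is forced by the connecting arrows once the image of one region is fixed) and see it is not block-shaped: in the example above, $\Pi_1$ maps to $\{(1,3),(1,4),(2,4),(2,5),(3,5)\}$ but $\Pi_2$ is forced onto the sheared patch $\{(3,6),(4,6),(4,7),(5,7),(5,8)\}$, which is the diagonal set of no vertex-block; in the odd case the image consists of diagonals of roughly half the girth of $\Pi_t$, nowhere near short block-diagonals. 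Your closing remark that the image is ``closed under neither $\tau_t$ nor its inverse'' is also false and is symptomatic of the wrong target: the correct image is by construction a union of $\tau_t$-orbits.

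The paper avoids polygon-to-polygon maps entirely and argues inside the AR-quivers: for $p$ even it takes the union of the $\tau_t$-orbits through the top and bottom $n$ rows of $\Gamma_t$ (these bands are disjoint exactly when $p>2$, which is where that hypothesis enters --- not as injectivity of an unrolling), and for $p$ odd the central band of $n$ rows at height $\frac{(p-1)(n+3)}{2}+1$; such a band is automatically a stable translation subquiver, and one checks that the identification induced on it by the glide reflection $\tau_t^{-1}[1]$ coincides with the one induced on $\Gamma_{n,p}$ by $(\tau_{n,p}^{-1}[1])^p$, and that the meshes match. If you want a genuinely geometric reformulation, the statement to prove is that $\Gamma_{n,p}$ is isomorphic to the full subquiver of $\Gamma_t$ on the diagonals of $\Pi_t$ of length at most $n+1$ (for $p$ even), respectively of the middle lengths $a+1,\dots,a+n$ (for $p$ odd) --- not on diagonals spanned by consecutive vertex-blocks.
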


\begin{proof} 
To prove the claim we establish an isomorphism
of stable translation quivers between $(\Gamma_{n,p},\tau_{n,p})$
and a  stable translation subquiver of $(\Gamma_t,\tau_t)$ for the different 
cases. 

Let us first study the case where $p$ is even.
We observe that the union of the $\tau_t$-orbits of the top and bottom $n$ rows
of the quiver $\Gamma_t$, illustrated in the gray strip in Figure \ref{fig.rep},
defines a subquiver $\tilde{\Gamma}_t$ of $\Gamma_t$. Since $p> 2$,
the top and the bottom $n$ rows do not overlap.

As $\Gamma_t$ is a stable
translation quiver, the same remains true for $\tilde{\Gamma}_t$.
And it turns out that when $t$ is as given in the claim,
$\tilde{\Gamma}_t$ and $\Gamma_{n,p}$  are isomorphic 
as stable translation quivers. First: by construction, the two quivers
have the same number of rows (namely $n$). To see the
isomorphism just on the level of quivers we compare the 
induced action of the auto equivalence $(\tau_{n,p}^{-1}[1])^p$ 
on $\Gamma_{n,p}$ with the action of $(\tau_{t}^{-1}[1])|_{\tilde{\Gamma}_t}$ on 
$\tilde{\Gamma}_t$: It is easy to check that the actions coincide.
Furthermore, because the meshes of the quivers $\tilde{\Gamma}_t$  and  $\Gamma_{n,p}$ 
coincide we deduce
that this gives an isomorphism of stable translation quivers.

When $p$ is odd, the embedding of $\Gamma_{n,p}$ into $\Gamma_t$ can be shown in a 
similar way. However, in this case  
we need to embed $\Gamma_{n,p}$ into the central band of $\Gamma_t$
to preserve the induced action of the autoequivalence, i.e.\ to preserve the
identifications of vertices in $\Gamma_{n,p}$. More precisely,
this time we have to consider 
the horizontal band $n$ vertices wide at the high of $\frac{(p-1)(n+3)}{2}+1$
vertices, counting from the bottom of $\Gamma_t$, see  Figure \ref{fig.rep}. 
Then we concludes as in the previous case.
\end{proof}

\subsection{Triangulated equivalence for $\mathcal{C}_{n,p}$ }
Since the inclusion of the AR-quiver of $\mathcal{C}_{n,p}$ in the AR-quiver of the cluster category
$\mathcal{C}_{t}$ for $t$ as in Lemma \ref{lemmasub} does not give rise to an inclusion at the level of 
full subcategories, $\mathcal{C}_{n,p}$ is in particular not a triangulated subcategory of
$\mathcal{C}_{t}$. However, it is possible to prove that it is
triangulated equivalent to a quotient category of $\mathcal{C}_{t}$.

Quotient categories of the type we are going to consider here
have been studied in \cite{Jorgensen}, et al.~. In particular,
J\o{}rgensen  showed that higher cluster categories are triangulated equivalent
to quotients of the cluster category. We will now recall
the definition of quotient categories and key properties,
which will be needed in the following.

Let $\mathcal{C}$ be an additive category and $\X$ a class of objects of
$\mathcal{C}$. Then the quotient category $\mathcal{C}_{\X}$ has by definition
the same objects as $\mathcal{C}$, but the morphism spaces are taken modulo all 
the morphisms factoring through an object of $\X$. 

Observe that if $\mathcal{C}$ is a triangulated
category, then $\mathcal{C}_{\X}$ needs not to be 
triangulated for all choices of $\X$. However, it
was shown  in \cite[Theorem 2.2]{Jorgensen}
that $\mathcal{C}_{\X}$ is always
pre-triangulated. This means 
that $\mathcal{C}_{\X}$ is 
equipped with a pair $(\sigma,\omega)$ of adjoint endofunctors 
satisfying a number of properties. However none of them
is necessarily an auto-equivalence. Nevertheless, taking a particular
choice of the class $\X$, J\o{}rgensen proved in \cite[ Theorem 3.3 ]{Jorgensen}
that the endofunctors $\sigma$ and $\omega$, 
can be turned into auto-equivalences, so that $(\mathcal{C}_{\X},\sigma)$ becomes a triangulated category. 
We use these facts to link the triangulated structure of $\mathcal{C}_{n,p}$ to a quotient category
of the cluster category of type $A_t$.

\begin{figure}[htbp]
 \unitlength=1cm
   \begin{center}
      \psfragscanon
      \psfrag{te1}{$1$}
      \psfrag{te2}{$n$}
\psfrag{te3}{$t$}
\psfrag{st1}{$1$}
\psfrag{st2}{$a$}
\psfrag{st3}{$a+n-1$}
\psfrag{st4}{$t$}
\centering
\includegraphics[width=0.2 \textwidth]{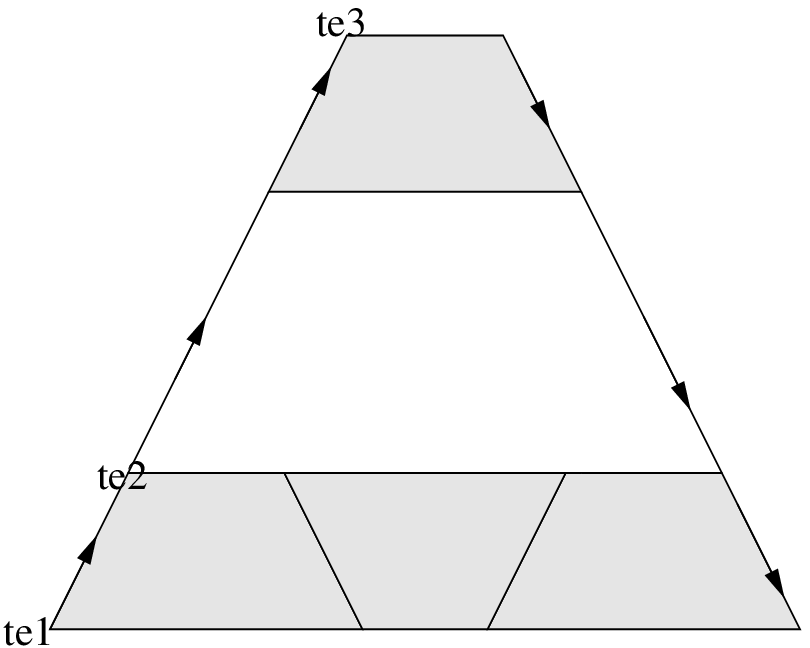}%
\qquad\qquad
\includegraphics[width=0.2 \textwidth]{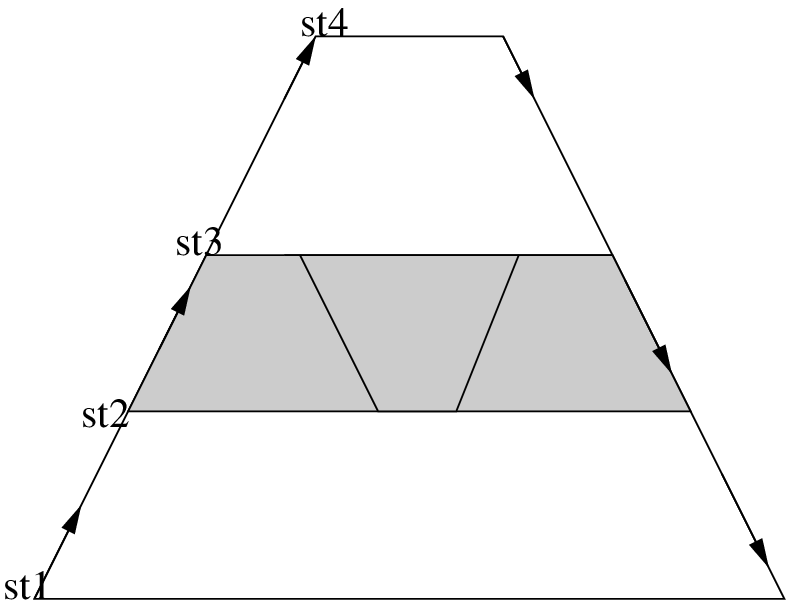}
\caption{Inclusions $\Gamma_{n,4} \subset\Gamma_t$ and 
 $\Gamma_{n,3}\subset\Gamma_t$, and $a=\frac{(p-1)(n+3)}{2}+1$.}
\label{fig.rep}
   \end{center}
\end{figure}

\begin{prop}\label{quot} 
Assume $p\neq2$ and define $t$ as in
Lemma \ref{lemmasub}. Then the category $\mathcal{C}_{n,p}$
is triangulated equivalent 
to a quotient of a cluster category of type $A_t$.
\end{prop}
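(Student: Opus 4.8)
The plan is to realise $\mathcal{C}_{n,p}$ as a triangulated quotient of $\mathcal{C}_t$ in the sense of J\o{}rgensen, using the embedding of AR-quivers furnished by Lemma \ref{lemmasub}. Let $\tilde{\Gamma}_t\subset\Gamma_t$ be the stable translation subquiver isomorphic to $\Gamma_{n,p}$ produced there (the top-and-bottom band when $p$ is even, the central band when $p$ is odd), and let $\X$ be the additive subcategory of $\mathcal{C}_t$ generated by the indecomposable objects sitting at the vertices of $\Gamma_t$ lying \emph{outside} $\tilde{\Gamma}_t$, i.e.\ the complementary rows. Since $p\neq 2$, the band $\tilde{\Gamma}_t$ does not overlap itself and its complement is a genuine, proper collection of rows; this is exactly the point at which the hypothesis $p\neq 2$ enters, for otherwise $\X$ would not be well defined and the construction degenerates.

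First I would verify that $\X$ meets the hypotheses of \cite[Theorem 3.3]{Jorgensen}. Concretely this amounts to checking that $\X$ is a functorially finite subcategory of $\mathcal{C}_t$ stable under the relevant syzygy data, so that the adjoint pair $(\sigma,\omega)$ attached to the pre-triangulated quotient $(\mathcal{C}_t)_{\X}$ of \cite[Theorem 2.2]{Jorgensen} becomes a pair of mutually quasi-inverse auto-equivalences. Granting this, $\bigl((\mathcal{C}_t)_{\X},\sigma\bigr)$ is a triangulated category. Because passing to $(\mathcal{C}_t)_{\X}$ annihilates precisely the indecomposables lying in $\X$ and alters the remaining $\Hom$-spaces only by killing morphisms that factor through $\X$, the indecomposable objects of $(\mathcal{C}_t)_{\X}$ are identified with the vertices of $\tilde{\Gamma}_t$ and its AR-quiver is $\tilde{\Gamma}_t$.

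Next I would transport the isomorphism $\tilde{\Gamma}_t\cong\Gamma_{n,p}$ of stable translation quivers from Lemma \ref{lemmasub} up to the level of categories. Both $\mathcal{C}_{n,p}$ (standard by Proposition \ref{bijvarphi}, which exhibits it as the mesh category of $\Gamma_{n,p}$) and $(\mathcal{C}_t)_{\X}$ are $\Hom$-finite Krull--Schmidt categories whose AR-quivers are the two isomorphic stable translation quivers, and both are \emph{standard}, i.e.\ equivalent to the mesh category of their AR-quiver (for the quotient this uses that a quotient of a mesh category by a set of vertices is again a mesh category). Hence the quiver isomorphism lifts to a $k$-linear equivalence of the underlying additive categories. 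To promote it to a \emph{triangle} equivalence I would check that the suspension $\sigma$ of $(\mathcal{C}_t)_{\X}$ corresponds, under this identification, to the suspension of $\mathcal{C}_{n,p}$: both act on the AR-quiver as the combinatorial shift encoding the fractional Calabi--Yau relation $\nu^p\cong[2p]$ of Lemma \ref{ZhuProp: 3.3}, so it suffices to compare the induced permutations of vertices and appeal to standardness.

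The main obstacle will be the compatibility of the triangulated structures rather than the combinatorics. Confirming that $\X$ satisfies J\o{}rgensen's conditions and that the resulting suspension $\sigma$ agrees on the nose with the shift of $\mathcal{C}_{n,p}$ (and not merely up to a power of $\tau$) requires tracking how the shift $[1]$ of $\mathcal{C}_t$ descends to the quotient and re-expressing it through the band identification of Lemma \ref{lemmasub}; the even and odd cases must be treated separately, and the self-overlap ruled out precisely by $p\neq 2$ is what guarantees that $\sigma$ is an auto-equivalence rather than only an endofunctor.
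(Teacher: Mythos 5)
Your choice of $\X$ (the additive subcategory generated by the indecomposables of $\Gamma_t$ lying outside the embedded band $\tilde{\Gamma}_t$) and your use of J\o{}rgensen's quotient machinery coincide with the paper's proof; the genuine gap lies in the final step, where you pass from the additive identification to a \emph{triangle} equivalence. Knowing that $(\mathcal{C}_t)_{\X}$ and $\mathcal{C}_{n,p}$ are $\Hom$-finite, Krull--Schmidt and standard with isomorphic AR-quivers, and even that the two suspension functors induce the same permutation of vertices, does \emph{not} imply that the resulting additive equivalence sends distinguished triangles to distinguished triangles: a triangulation is extra structure that is not determined by the additive category together with the action of the shift on objects, which is exactly the phenomenon that recognition theorems are designed to rule out. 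So ``compare the induced permutations of vertices and appeal to standardness'' cannot close the argument, and it is precisely this step that carries all the content --- as you yourself note when you call it the main obstacle.

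The missing ingredient is \emph{algebraicity} together with the corresponding recognition theorem. The paper closes the gap as follows: $(\mathcal{C}_t)_{\X}$ is standard \emph{and of algebraic origin} by \cite[Theorem 4.2]{Jorgensen}; $\mathcal{C}_{n,p}$ is of algebraic origin by Keller's results on orbit categories \cite[Section 9.3]{K2} and standard by \cite[Proposition 6.1.1.]{C.Amiot}; and then Amiot's recognition theorem (quoted as \cite[Theorem 5.1]{Jorgensen}) states that standard algebraic triangulated categories with isomorphic AR-quivers as translation quivers are triangle equivalent. Algebraicity of $\mathcal{C}_{n,p}$ and of the quotient is the hypothesis your outline never mentions, and without it no amount of combinatorial comparison of shifts on the quiver yields the conclusion. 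A smaller point of precision: the condition to verify for J\o{}rgensen's construction is the concrete statement $\tau_t\X=\X$ (the deleted bands are unions of $\tau_t$-orbits), rather than an unspecified ``stability under syzygy data''; with that check made, \cite[Theorem 4.2]{Jorgensen} already hands you the triangulated structure, the AR-quiver of the quotient, standardness and algebraicity all at once.
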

\begin{proof} 
For $p$ even, denote by $\X$ the additive full subcategory generated by the indecomposable objects
in the band (union of $\tau$-orbits) of $t-2n$ vertices in the middle of $\Gamma_t$, for $t=(n+3)\frac{p}{2}-3$. 
For $p$ odd i.e.~ $t=(n+3)p-3$ one takes as $\X$ the additive full subcategory generated by 
the indecomposable objects
in the band of
$\frac{t-n}{2}$ 
vertices at the top and bottom of $\Gamma_t$.
Then, in both cases we have that $\tau_t \X=\X$. So the AR-quiver of
the quotient category $(\mathcal{C}_t)_{\X}$ is
obtained
by deleting the vertices
corresponding to the objects of $\X$ and the arrows
linked with them (\cite[Theorem 4.2]{Jorgensen}). Furthermore, 
$(\mathcal{C}_t)_{\X}$ is connected, and has finitely many
indecomposable objects up to isomorphism. Furthermore, again by 
\cite[Theorem 4.2]{Jorgensen} $(\mathcal{C}_t)_{\X}$ is standard and of algebraic origin.
Proceeding as in \cite[Theorem 5.2]{Jorgensen} we
conclude that $(\mathcal{C}_t)_{\X}$
is triangulated equivalent to a quotient of a cluster
category of type $A_n$. 

It remains to see that $(\mathcal{C}_t)_{\X}$ and $\mathcal{C}_{n,p}$ 
are equivalent as triangulated categories.
For this we observe that $\mathcal{C}_{n,p}$ is of algebraic origin
by results of \cite[Section 9.3]{K2} and standard by \cite[Proposition 6.1.1.]{C.Amiot}.
Furthermore, it is straightforward to see that
the AR-quivers of $(\mathcal{C}_t)_{\X}$ and
$\mathcal{C}_{n,p}$ are isomorphic as translation quivers.
Thus we are in the conditions of Amiot's Theorem \cite[Theorem 5.1]{Jorgensen}
applied to $(\mathcal{C}_t)_{\X}$ and
$\mathcal{C}_{n,p}$. Hence we deduce that these categories are equivalent as triangulated categories, and so the claim follows.
\end{proof}

\section{Cluster tilting theory for $\mathcal{C}_{n,p}$}
It is well known that cluster tilting objects in cluster categories are strongly 
linked to clusters in cluster algebras discovered by Fomin-Zelevinski in \cite{FZI}. 
This link has been established in \cite{BMRRT}.
Studying the endomorphism algebra of these objects provides a way to 
recover the exchange matrix, indispensable in the mutation process in a cluster algebra.

In this section we are interested in understanding the cluster tilting objects of 
$\mathcal{C}_{n,p}$,
and compare them with configurations of diagonals in the polygon $\Pi^p$. When $p=1$,
it is known that cluster tilting objects of $\mathcal{C}_{n}$ correspond to a 
{\em triangulation} of a regular $n+3$-gon, that is a maximal collection of pairwise non crossing
diagonals.

Cluster tilting objects 
in $\mathcal{C}_{n,p}$
have also been studied from an algebraic point of view in \cite{Zhu}. In fact, Zhu proves that
the endomorphism algebra of a cluster tilting object of $\mathcal{C}_{n,p}$
gives a cover of the endomorphism algebra of the same object, seen now
as a cluster tilting object of the cluster category.

In the following definition let $\mathrm{add}(T)$ be
the full subcategory consisting of
direct summands of direct sums of finitely many copies of $T$.
\begin{de} \label{deftilting}
An object $T \in \mathcal{C}_{n,p}$ is called a {\em cluster tilting object} 
if for any object $X\in\mathcal{C}_{n,p}$  we have that 
$\mathrm{Ext}_{\mathcal{C}_{n,p}}^1(T,X)=0$
if and only if $X\in\mathrm{add}(T)$, and $\mathrm{Ext}_{\mathcal{C}_{n,p}}^1(X,T)=0$
if and only if $X\in\mathrm{add}(T)$.
\end{de}
A cluster tilting object is called basic if all its direct summands are pairwise non isomorphic. 
In this paper all the cluster tilting objects we consider will be basic, so we omit the term basic.
\noindent
The next result follows from \cite[Section 5]{CCS}.
\begin{lemma}\label{tiltinC} 
$T$ is a cluster tilting objects in $\mathcal{C}_n$ if and only if
$\mathcal{X}_T$ is a triangulation of a regular $n+3$-gon.
The cardinality of $\mathcal{X}_T$ is $n$.
\end{lemma}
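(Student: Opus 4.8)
The plan is to transport the entire statement through the Caldero--Chapoton--Schiffler dictionary \cite{CCS}, which in the case $p=1$ is precisely the bijection $\varphi$ of Proposition \ref{bijvarphi}: indecomposable objects of $\mathcal{C}_n$ correspond bijectively to diagonals of a regular $(n+3)$-gon, and I write $\mathcal{X}_T=\{D_X : X \text{ a summand of } T\}$ for the collection of diagonals attached to a basic object $T$. With this in hand the argument reduces to a single geometric translation of the Ext-vanishing condition. First I would recall from \cite[Section 5]{CCS} that, for indecomposables $X,Y$, one has $\mathrm{Ext}^1_{\mathcal{C}_n}(X,Y)=0$ if and only if the diagonals $D_X$ and $D_Y$ do not cross; indeed in type $A$ the space $\mathrm{Ext}^1_{\mathcal{C}_n}(X,Y)$ is at most one-dimensional, its dimension counting the crossings of the two diagonals, and the $2$-Calabi--Yau symmetry gives $\mathrm{Ext}^1_{\mathcal{C}_n}(X,Y)\cong D\,\mathrm{Ext}^1_{\mathcal{C}_n}(Y,X)$, so the two non-crossing conditions in Definition \ref{deftilting} agree.

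Granting this, the forward direction runs as follows. If $T$ is cluster tilting, then every pair of summands $X,Y$ satisfies $\mathrm{Ext}^1_{\mathcal{C}_n}(X,Y)=0$ by Definition \ref{deftilting}, so the diagonals of $\mathcal{X}_T$ are pairwise non-crossing. To see maximality, suppose some diagonal $D_Z$ crosses no element of $\mathcal{X}_T$; then $\mathrm{Ext}^1_{\mathcal{C}_n}(T,Z)=0=\mathrm{Ext}^1_{\mathcal{C}_n}(Z,T)$, whence $Z\in\mathrm{add}(T)$ and $D_Z\in\mathcal{X}_T$. Thus $\mathcal{X}_T$ is a maximal collection of pairwise non-crossing diagonals, that is, a triangulation. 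Conversely, if $\mathcal{X}_T$ is a triangulation and $X$ is any indecomposable, then either $D_X$ crosses some diagonal of $\mathcal{X}_T$, forcing $\mathrm{Ext}^1_{\mathcal{C}_n}(T,X)\neq 0$, or $D_X$ crosses nothing in $\mathcal{X}_T$ and hence already belongs to it by maximality, so $X\in\mathrm{add}(T)$. Together with the symmetry noted above this verifies both equivalences of Definition \ref{deftilting}, so $T$ is cluster tilting.

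The cardinality count is then the elementary combinatorial fact that any triangulation of a convex $N$-gon uses exactly $N-3$ diagonals, decomposing it into $N-2$ triangles; with $N=n+3$ this yields $n$, as claimed. The only genuinely non-formal ingredient is the crossing interpretation of $\mathrm{Ext}^1_{\mathcal{C}_n}$, which is exactly \cite[Section 5]{CCS}, so I expect the bulk of the work to be citing that correspondence correctly; the main point requiring care is the maximality step, where one must check that a family of pairwise non-crossing diagonals that cannot be enlarged is genuinely a triangulation rather than merely a non-crossing set, which is precisely where the cluster tilting completeness condition is used.
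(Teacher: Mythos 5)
Your proof is correct and follows essentially the same route as the paper, which simply cites \cite[Section 5]{CCS}: the crossing interpretation of $\mathrm{Ext}^1_{\mathcal{C}_n}$ plus the $2$-Calabi--Yau symmetry is exactly the content being invoked there, and your maximality argument and the $N-3$ diagonal count fill in the routine details. Note only that since the paper defines a triangulation \emph{as} a maximal collection of pairwise non-crossing diagonals, the final caveat in your last sentence is vacuous rather than a point requiring care.
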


As our goal is to link cluster tilting objects
of $\mathcal{C}_{n,p}$ to diagonals in $\Pi^p$ we first need to express
what it means geometrically that two objects in $\mathcal{C}_{n,p}$ have no extension.
We will see that it is not enough to study only diagonals in a single region 
$\Pi_k$ in $\Pi^p$, but also diagonals in the regions $\Pi_{k-1}$ respectively
$\Pi_{k+1}$,
depending on which entry of the bifunctor $\mathrm{Ext}^1_{\mathcal{C}_{n,p}}(-,-)$
is fixed.

We remind the reader
that diagonals of $\Pi^p$ consist only of a subset of all the lines joining vertices of $\Pi^p$. 
In the next result we assume that
$i<j$ and $i'<j'$ in the writing of the diagonals $D_X:= (i,j,l)$ and $D_Y:= (i',j',l')$ of $\Pi^p$.
\begin{lemma}\label{exts} Let $D_X:= (i,j,l)$ be fixed and $D_Y:= (i',j',l')$.  
Then 
$$
\mathrm{dim}(\mathrm{Ext}^1_{\mathcal{C}_{n,p}}(X,Y))=1\Leftrightarrow 
\begin{cases}
l=l'  \text{ and }   1\leq i'<i<j'<j\leq N,\\
l=l'+1,  \text{ and } 1\leq i<i'<j<j' \leq N
\end{cases}
$$
Dually, we have
$$\mathrm{dim}(\mathrm{Ext}^1_{\mathcal{C}_{n,p}}(Y,X))=1\Leftrightarrow
\begin{cases}
l'=l, \textrm{ and } 1\leq i< i'<j<j' \leq N,\\
l'=l+1, \textrm{ and} 1\leq i'<i<j'<j\leq N.
\end{cases}$$ 
Otherwise $\mathrm{dim}(\mathrm{Ext}^1_{\mathcal{C}_{n,p}}(-,-))=0$.
\end{lemma}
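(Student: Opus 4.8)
The plan is to trade the computation of $\mathrm{Ext}^1$ for that of an ordinary morphism space, which can then be read off from the mesh category set up in Proposition~\ref{bijvarphi}. By definition $\mathrm{Ext}^1_{\mathcal{C}_{n,p}}(X,Y)=\mathrm{Hom}_{\mathcal{C}_{n,p}}(X,Y[1])$, and since $\mathcal{C}_{n,p}$ carries the Serre functor $\nu=\tau[1]$ (Lemma~\ref{ZhuProp: 3.3}), Serre duality gives
\[
D\mathrm{Hom}_{\mathcal{C}_{n,p}}(X,Y[1])\cong\mathrm{Hom}_{\mathcal{C}_{n,p}}(Y[1],\nu X)=\mathrm{Hom}_{\mathcal{C}_{n,p}}(Y[1],\tau X[1])\cong\mathrm{Hom}_{\mathcal{C}_{n,p}}(Y,\tau X),
\]
the last step because $[1]$ is an autoequivalence. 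Hence $\dim\mathrm{Ext}^1_{\mathcal{C}_{n,p}}(X,Y)=\dim\mathrm{Hom}_{\mathcal{C}_{n,p}}(Y,\tau X)$, and symmetrically $\dim\mathrm{Ext}^1_{\mathcal{C}_{n,p}}(Y,X)=\dim\mathrm{Hom}_{\mathcal{C}_{n,p}}(X,\tau Y)$. So it suffices to characterize when these Hom spaces are nonzero.

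First I would rewrite $\tau X$ as a diagonal. By the definition of the translation, $D_{\tau X}=(i-1,j-1,l)$ when $i>1$, whereas $D_{\tau X}=(j-1,N,l-1)$ when $i=1$; the two cases record exactly whether the translate remains inside $\Pi_l$ or is pushed into $\Pi_{l-1}$. For the within-region situation I would invoke the description of morphism spaces in the mesh category of a single polygon from \cite{CCS}: a nonzero morphism $D_Y\to D_{\tau X}$ inside a fixed $\Pi_k$ exists precisely under the CCS nesting condition on the endpoints, and transporting this condition back across $\tau$ reproduces the inequality $i'<i<j'<j$ attached to the case $l=l'$ (and, for $\mathrm{Ext}^1(Y,X)=\dim\mathrm{Hom}(X,\tau Y)$, the inequality $i<i'<j<j'$). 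The remaining contributions come from morphisms between two consecutive regions, governed by the gluing arrows $\mathrm{Irr}\rho\mathrm{Rot}$ and the new mesh relations of Definition~\ref{derot}(3); tracking these produces exactly the case $l=l'+1$ with $i<i'<j<j'$, and the dual statement follows by exchanging $X$ and $Y$.

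As a bookkeeping device, and to pin down the numerics, I would expand the morphism space through the orbit-category formula of Definition~\ref{defrepcat},
\[
\mathrm{Hom}_{\mathcal{C}_{n,p}}(Y,\tau X)=\bigoplus_{s\in\Z}\mathrm{Hom}_{\mathcal{D}}\bigl(Y,(\tau^{-p}[p])^{s}\,\tau X\bigr),
\]
using that, for $X,Y$ chosen in the fundamental domain $\bigcup_{i=1}^{p}\mathcal{F}_i$, the $\Z A_n$ shape of $\mathcal{D}$ forces every $\mathcal{D}$-morphism space between indecomposables to have dimension at most $1$ and at most one summand to be nonzero. This simultaneously shows that the two displayed cases are mutually exclusive, so that $\dim\mathrm{Ext}^1$ never exceeds $1$ and equals $0$ in all other configurations. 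Specializing $p=1$ collapses the region index modulo $1$, identifying $l=l'$ with $l=l'+1$ and merging the two inequalities into the symmetric crossing condition of \cite{CCS}, which is a reassuring consistency check.

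The step I expect to be the main obstacle is the between-region case. Because $\mathcal{C}_{n,p}$ is only fractionally Calabi-Yau of dimension $\tfrac{2p}{p}$, so that $\tau\not\cong[1]$ unless $p=1$, the usual $2$-Calabi-Yau symmetry $\mathrm{Ext}^1(X,Y)\cong D\mathrm{Ext}^1(Y,X)$ breaks down, and it is precisely this failure that forces the asymmetric shift $l\mapsto l+1$ of the region index. Making it precise requires carefully following how $\tau$ carries a diagonal with $i=1$ out of $\Pi_l$ via $\tau(1,j,l)=(j-1,N,l-1)$, and verifying that the new mesh relations between consecutive regions reproduce the inequality $i<i'<j<j'$ together with the region shift, and contribute nothing beyond it.
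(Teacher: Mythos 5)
Your proposal is correct and follows essentially the same route as the paper's proof: both reduce $\mathrm{Ext}^1_{\mathcal{C}_{n,p}}(X,Y)$ via the Serre functor $\nu=\tau[1]$ to the space $\mathrm{Hom}_{\mathcal{C}_{n,p}}(Y,\tau X)$, compute its support (the backwards, respectively forwards, hammock) from the mesh relations, and translate the answer into crossing conditions on diagonals through the bijection of Proposition~\ref{bijvarphi}, with the case $i=1$, $\tau(1,j,l)=(j-1,N,l-1)$, accounting for the region shift $l=l'+1$. Your additional use of the orbit-category decomposition to bound the dimension by $1$ and the $p=1$ consistency check are harmless refinements of the same argument.
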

\begin{proof} When $p=1$, it was remarked in \cite[Section 5]{CCS} that 
$$\mathrm{dim}(\mathrm{Ext}^1_{\mathcal{C}_{n}}(X,Y))=1$$ 
if and only if the corresponding diagonals cross.
For all the other values
of $p$ we have to consider also diagonals in neighbouring regions. 
To compute the $\mathrm{Ext}$-spaces 
we proceed using the fact that
the triangulated category $\mathcal{C}_{n,p}$ has
Serre duality by Lemma \ref{ZhuProp: 3.3}. Thus, in the first case we have
\begin{align*}
D\mathrm{Ext}^1_{\mathcal{C}_{n,p}}(X,-)&=D\mathrm{Hom}_{\mathcal{C}_{n,p}}(X,-[1])\\
&\cong \mathrm{Hom}_{\mathcal{C}_{n,p}}(-[1],\tau X[1])\\
&\cong \mathrm{Hom}_{\mathcal{C}_{n,p}}(-,\tau X).
\end{align*}
Here we used that $\mathcal{C}_{n,p}$ has finite dimensional $\mathrm{Hom}$ spaces, and therefore also finite
dimensional
$\mathrm{Ext}$ spaces.
As next we compute the $\mathrm{Hom}(-,\tau X)$ support using the mesh relations 
in the AR-quiver of $\mathcal{C}_{n,p}$. That gives a rectangular region, also
known as the backwards hammock.
Then one concludes using the bijection between indecomposable objects and diagonals
from Proposition \ref{bijvarphi}.

In the second case, we proceed dually. Then the study the forward hammocks leads to the desired claim.
\end{proof}
With this result in mind we are able to deduce the desired geometric description
of cluster tilting objects, of $\mathcal{C}_{n,p}$, and in fact
this gives a geometric/combinatorial analogue to \cite[Thm: 3.5]{Zhu}
for the case  $\mathcal{H}=\mathrm{mod }k A_n.$
\noindent
Recall that 
the map $\rho:\Pi^p\rightarrow \Pi^p$ is a clockwise rotation
of $\frac{2\pi}{p}$ around the center of $\Pi^p$.
\begin{prop}\label{tiltinbiggon}
$T$ is a cluster tilting object of $\mathcal{C}_{n,p}$ if and only if
$$
\mathcal{X}_T=\mathcal{X}_{T'}\cup\rho(\mathcal{X}_{T'})\cup\dots\cup\rho^{p-1}(\mathcal{X}_{T'})
$$
where $\mathcal{X}_{T'}$ is a triangulation of the region $\Pi_1$.
\end{prop}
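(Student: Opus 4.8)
The plan is to transport the whole statement to the combinatorics of diagonals. By the bijection $\varphi$ of Proposition~\ref{bijvarphi} an object of $\mathcal{C}_{n,p}$ is recorded by its diagonals in $\Pi^p$, and by Definition~\ref{deftilting} together with Lemma~\ref{exts} the property ``$T$ is cluster tilting'' becomes the purely geometric assertion that $\mathcal{X}_T$ is a \emph{maximal} collection of diagonals of $\Pi^p$ all of whose pairwise extension groups vanish. I will use the geometric content of Lemma~\ref{exts} throughout: writing $S_k$ for the set of shapes $(i,j)$ with $(i,j,k)\in\mathcal{X}_T$, a nonzero $\mathrm{Ext}^1$ between two diagonals occurs exactly in two situations, an honest crossing inside a single region $\Pi_k$, or a shifted crossing $i<i'<j<j'$ between a diagonal $(i,j,k+1)$ of an upper region and a diagonal $(i',j',k)$ of the next lower region. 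Thus rigidity of $T$ is equivalent to: each $S_k$ is non-crossing, and for every $k$ no diagonal of $S_{k+1}$ crosses a diagonal of $S_k$ ``from the left''; I denote this last relation by $S_{k+1}\rightsquigarrow S_k$.

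For the implication $(\Leftarrow)$ I would check rigidity and maximality directly. If $\mathcal{X}_{T'}$ triangulates $\Pi_1$ and $\mathcal{X}_T=\bigcup_{k=0}^{p-1}\rho^{k}(\mathcal{X}_{T'})$, then inside each region the diagonals form a triangulation, so no same-region crossing occurs; and since $\rho$-invariance makes the shapes in two consecutive regions identical, the shifted-crossing pattern $i<i'<j<j'$ would be a genuine crossing of two diagonals of the \emph{same} triangulation, which is impossible. Hence all extensions among summands vanish. For maximality, any diagonal $(a,b,c)\notin\mathcal{X}_T$ has shape $(a,b)$ outside the triangulation, so $(a,b)$ crosses some triangulation shape $(a',b')$; the summand $(a',b',c)$ then gives a same-region extension with $(a,b,c)$, while its $\rho$-translate $(a',b',c\pm1)$ in the adjacent region gives the extension in the opposite variable, so both $\mathrm{Ext}^1(T,(a,b,c))$ and $\mathrm{Ext}^1((a,b,c),T)$ are nonzero. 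Thus $(a,b,c)\notin\mathrm{add}(T)$ and $T$ is cluster tilting.

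For the implication $(\Rightarrow)$ the real work is to prove $\rho$-invariance; once that is known, maximality forces the common shape-set $S:=S_1=\cdots=S_p$ to be a maximal non-crossing family, i.e.\ a triangulation of $\Pi_1$, and identifying $\eta_p(T)$ with $\mu_p(\mathcal{X}_T)=S$ (a cluster tilting object of $\mathcal{C}_n$, hence a triangulation by Lemma~\ref{tiltinC}) finishes the proof. To obtain $\rho$-invariance I would first note that each $S_k$ is a full triangulation: rigidity gives $|S_k|\le n$, while the covering description of $\mathrm{End}_{\mathcal{C}_{n,p}}(T)$ of \cite{Zhu} shows that $T$ has exactly $pn$ indecomposable summands, forcing $|S_k|=n$ for every $k$. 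On triangulations the relation $\rightsquigarrow$ is a partial order: it is antisymmetric because $A\rightsquigarrow B$ and $B\rightsquigarrow A$ together forbid \emph{every} crossing between $A$ and $B$, so $A\cup B$ is non-crossing and maximality forces $A=B$; and it is acyclic, being the order of the Tamari lattice on triangulations of $\Pi$, with the two fans as bottom and top. The cyclic chain $S_1\rightsquigarrow S_p\rightsquigarrow\cdots\rightsquigarrow S_2\rightsquigarrow S_1$ coming from rigidity therefore collapses to $S_1=\cdots=S_p$, which is precisely $\rho$-invariance.

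I expect the acyclicity of $\rightsquigarrow$ to be the main obstacle: antisymmetry and the $(\Leftarrow)$ direction are routine uses of Lemma~\ref{exts}, but ruling out longer directed cycles of consecutive regions is the genuine combinatorial content, and is exactly what singles out the $\rho$-symmetric families among all rigid maximal ones. A conceptually cleaner but less self-contained route to the same point is to exploit that $\eta_p$ is a triangle functor with $\eta_p\circ F\cong\eta_p$, since $F=\tau^{-1}[1]$ becomes the identity on $\mathcal{C}_n=\mathcal{D}/\langle\tau^{-1}[1]\rangle$, and that $F$ realizes $\rho$; if $\eta_p$ restricts to a bijection on cluster tilting objects as in \cite{Zhu}, then $\eta_p(FT)\cong\eta_p(T)$ immediately yields $FT\cong T$, i.e.\ $\rho(\mathcal{X}_T)=\mathcal{X}_T$. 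In either formulation the crux is the same acyclicity/injectivity statement.
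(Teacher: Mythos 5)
Your reduction of the problem and your $(\Leftarrow)$ direction are correct and follow the same path as the paper: everything is transported through Proposition~\ref{bijvarphi} and Lemma~\ref{exts}, and the $\rho$-orbit of a triangulation is then checked against both halves of Definition~\ref{deftilting}. In fact your $(\Leftarrow)$ is more careful than the paper's one-sentence treatment: the observation that a same-region summand supplies one Ext-direction while its $\rho$-translate in the adjacent region supplies the other is exactly what the definition requires, and the paper never spells it out. Your analysis of $(\Rightarrow)$ also isolates the right question: Lemma~\ref{exts} only constrains pairs in the same region and in \emph{consecutive} regions, so the whole content is that the cyclic chain $S_1\rightsquigarrow S_p\rightsquigarrow\cdots\rightsquigarrow S_2\rightsquigarrow S_1$ forces $S_1=\cdots=S_p$. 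Your use of Zhu's results (the summand count $pn$, giving that each $S_k$ is a full triangulation) is legitimate and non-circular, since those results predate and do not depend on this proposition.

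The gap is that the acyclicity of $\rightsquigarrow$ is never proved: you assert that $\rightsquigarrow$ \emph{is} the Tamari order, but that identification is itself a nontrivial claim (it is true, yet proving it is at least as much work as the acyclicity it is meant to deliver), and you flag the step as an open obstacle rather than closing it. It is worth knowing that the paper is no better off here: its entire forward direction is the sentence ``the only possible way to get a maximal configuration of diagonals which do not cross $\dots$ is to take the same triangulation in each region,'' i.e.\ it asserts precisely the statement you could not prove. The step can be closed elementarily, staying inside your framework. For triangulations $A\rightsquigarrow B$ of the $N$-gon, let $x_A$ and $x_B$ be the apexes of the triangles over the arc $(1,N)$. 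If $x_B<x_A$, then $(1,x_A)$ is a genuine diagonal of $A$, $(x_B,N)$ is a genuine diagonal of $B$, and $1<x_B<x_A<N$ is exactly a forbidden shifted crossing; hence $A\rightsquigarrow B$ implies $x_A\le x_B$. Around the cycle all apexes coincide, so every $S_k$ contains the triangle $(1,x,N)$; the relation $\rightsquigarrow$ restricts to the two sub-polygons this triangle cuts off (a shifted crossing cannot involve one diagonal from each side), and induction on $N$ collapses the cycle. With that inserted your argument is complete. Finally, your fallback through Zhu's Theorem 3.5 is also valid, and the paper itself presents Proposition~\ref{tiltinbiggon} as the geometric analogue of that theorem; but note that invoking it in full replaces the combinatorial proof wholesale rather than supplying the missing step inside it.
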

\begin{proof}
We saw in Lemma \ref{exts}  
that the non-vanishing of the $\mathrm{Ext}$ spaces
can arise from crossing within the same  $\Pi_k$  or from crossings
between diagonals in the neighbouring regions $\Pi_{k+1}$ and $\Pi_{k-1}$ of $\Pi^p$. 
In particular, diagonals $D_X$ in $\Pi_k$ and
$D_Y$ in $\Pi_{k+1}$ give rise to a non-vanishing
$\mathrm{Ext}$ spaces if and only if $\rho^{-1}(D_Y)$ crosses
$D_X$ inside $\Pi_{k}$  or $\rho(D_X)$ crosses
$D_Y$ inside $\Pi_{k+1}$, and the vertices of the diagonals 
satisfy the conditions of Lemma \ref{exts}. 
Thus,
the only  possible way to get a maximal configuration of diagonals which do not cross
in the sense of both Lemma \ref{exts} is to take the same
triangulation in each region $\Pi_k,$ for $1\leq k\leq p$. 

On the other hand, by Lemma \ref{tiltinC} we have that a triangulation $\mathcal{X}_{T'}$ of a region $\Pi_k,$ corresponds
to a cluster tilting object in $\mathcal{C}_{n}$, then $\rho^i(\mathcal{X}_{T'})$, for
$0\leq i \leq p-1$, gives a 
cluster tilting object $\mathcal{C}_{n,p}$, by the previous argument.
\end{proof}
\begin{cor}
Any cluster tilting object in $\mathcal{C}_{n,p}$ contains
$pn$ pairwise non isomorphic summands.
\end{cor}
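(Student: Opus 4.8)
The plan is to combine the explicit description of cluster tilting objects furnished by Proposition~\ref{tiltinbiggon} with the diagonal count of Lemma~\ref{tiltinC} and the object--diagonal dictionary of Proposition~\ref{bijvarphi}. By Proposition~\ref{tiltinbiggon}, a cluster tilting object $T$ of $\mathcal{C}_{n,p}$ is determined by the collection of diagonals
$$
\mathcal{X}_T=\mathcal{X}_{T'}\cup\rho(\mathcal{X}_{T'})\cup\dots\cup\rho^{p-1}(\mathcal{X}_{T'}),
$$
where $\mathcal{X}_{T'}$ is a triangulation of the region $\Pi_1$. Since $\Pi_1$ is homotopic to a regular $N=n+3$-gon, Lemma~\ref{tiltinC} tells us that $\mathcal{X}_{T'}$ consists of exactly $n$ diagonals. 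The whole computation thus reduces to counting the diagonals in this union.

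Next I would check that the $p$ sets $\rho^i(\mathcal{X}_{T'})$, for $0\le i\le p-1$, are pairwise disjoint as collections of diagonals of $\Pi^p$. Recall that $\Pi_k=\rho^{k-1}(\Pi_1)$, so the rotation $\rho^i$ carries a diagonal $(a,b,1)$ of $\Pi_1$ to the diagonal $(a,b,i+1)$ of the region $\Pi_{i+1}$; in other words $\rho$ affects only the region index $k$ and leaves the pair $(a,b)$ unchanged. As $i$ ranges over $0,\dots,p-1$, the resulting region indices $1,\dots,p$ are pairwise distinct modulo $p$, so diagonals coming from different rotated copies are distinguished by their third coordinate and none of them coincide. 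Hence $\mathcal{X}_T$ splits into $p$ disjoint blocks, each of cardinality $n$, giving $|\mathcal{X}_T|=pn$.

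Finally I would transport this count back into the category. Under the bijection $\varphi$ of Proposition~\ref{bijvarphi}, the indecomposable summands of $T$ correspond bijectively to the diagonals in $\mathcal{X}_T$, and distinct diagonals correspond to pairwise non-isomorphic indecomposables. Therefore $T$ has exactly $pn$ pairwise non-isomorphic summands, as claimed.

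The only step that requires genuine care is the disjointness verification: one must make sure that applying powers of $\rho$ never identifies diagonals lying in different regions, which is precisely where the convention that the third index $k$ is read modulo $p$ comes into play. Everything else is immediate from the cited results, so the argument is short.
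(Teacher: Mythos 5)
Your proof is correct and follows essentially the same route as the paper: the paper's own (much terser) argument also counts the $pn$ diagonals in $\mathcal{X}_T$ via Proposition~\ref{tiltinbiggon} and the fact that each region $\Pi_k$ is homotopic to a regular $(n+3)$-gon. Your added verifications (disjointness of the $\rho$-translates via the region index, and the transport back through the bijection $\varphi$ of Proposition~\ref{bijvarphi}) are details the paper leaves implicit, not a different method.
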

\begin{proof}
Since each region $\Pi_k$ is homotopic to a regular $n+3$-gon,  
the set $\mathcal{X}_T$ consists of 
is $pn$ diagonals of $\Pi^p$.
\end{proof}

Observe that in $\mathcal{C}_{n,p}$ there is exactly one complement to an object
in a cluster tilting object $T$, whereas there are exactly two in 
$\mathcal{C}_{n}$. This observation follows from the fact that
whenever a diagonal $D_X$ is removed from $\mathcal{X}_{T}$ as given in Proposition \ref{tiltinbiggon},
and we replace $D_X$ with a unique other
diagonal $D_{X'}$ completing again the triangulation, 
we obtain a new triangulation which is no longer
stable under the rotation of $\rho$.
Thus, to get a new cluster tilting object in $\mathcal{C}_{n,p}$ 
one needs to replace $p$ summands of $T$ corresponding to a complete
$\rho$-orbit of diagonals in $\Pi^p$.
  
\section{Repetitive higher cluster categories $\mathcal{C}^m_{n,p}$ }
In this section we study orbit categories of the form
$$ \mathcal{C}^m_{n,p}:=\mathcal{D}/\langle (\tau^{-1}[m])^p\rangle. $$
We call them {\em repetitive higher cluster categories of type $A_{n}$}.
The class of objects is the same as the class of objects
in $\mathcal{D}$ and the space of morphisms
is given by
$$
\mathrm{Hom}_{\mathcal{C}^m_{n,p}}(X,Y)=\bigoplus_{i\in\Z}\mathrm{Hom}_{\mathcal{D}}(X,(\tau^{-p}[pm])^iY).
$$

We will see below that these categories are interesting because their geometric models
provides us with a geometric description of the bounded derived category
 $\mathcal{D}$.

Repetitive higher cluster categories have similar properties to
the repetitive cluster categories. In particular, identifying
$\mathrm{ind}\mathcal{C}^m$ with $\tilde{\mathcal{F}}=\tilde{\mathcal{F}}_1$, the fundamental
domain for the functor $\tau^{-1}[m]$ in $\mathcal{D}$, and $\tilde{\mathcal{F}}_i$
with the $(\tau^{-1}[m])^{i-1}$-shifts of $\tilde{\mathcal{F}}$ for $1\leq i\leq p,$
we can prove:

\begin{lemma} For all $m\in\N.$
\begin{enumerate}
	\item The projection functors $\pi_p^m:\mathcal{D}\rightarrow\mathcal{C}^m_{n,p}$ and 
        $\eta_p^m: \mathcal{C}^m_{n,p}\rightarrow \mathcal{C}^m_{n}$
	are triangle functors.
        \item The category $ \mathcal{C}^m_{n,p}$ is triangulated, with Serre functor $\tau[1]$ induced from $\mathcal{D}$.
	\item $ \mathcal{C}^m_{n,p}$ is fractionally Calabi Yau of dimension $\frac{p(m+1)}{p}$ and Krull-Schmidt.
	\item $\mathrm{ind}( \mathcal{C}^m_{n,p})=\bigcup_{i=1}^{p}(\tilde{\mathcal{F}}_i)$.
\end{enumerate}
\end{lemma}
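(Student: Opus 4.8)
The plan is to establish all four items by transporting, \emph{mutatis mutandis}, the $m=1$ arguments recorded in Lemma~\ref{ZhuProp: 3.3} to the present setting; the only external inputs are Keller's theorem on triangulated orbit categories \cite{Keller} and the Serre duality of $\mathcal{D}$. Throughout I write $G:=\tau^{-p}[pm]=(\tau^{-1}[m])^p$ for the autoequivalence defining $\mathcal{C}^m_{n,p}$ and $\nu:=\tau[1]$ for the Serre functor of $\mathcal{D}$ recalled after the definition of Serre duality.

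For item (2) I would appeal to \cite{Keller}: since $\mathcal{D}=\mathcal{D}^b(\mathrm{mod}kA_n)$ is the bounded derived category of a hereditary algebra and $G$ is a standard, algebraic-origin autoequivalence whose orbits meet the heart only finitely often, the orbit category $\mathcal{C}^m_{n,p}=\mathcal{D}/\langle G\rangle$ carries a triangulated structure for which $\pi_p^m:\mathcal{D}\to\mathcal{C}^m_{n,p}$ is a triangle functor. These are exactly the hypotheses verified for $\tau^{-1}[m]$ (the case $\mathcal{C}^m_n$), and they are preserved on passing to the $p$-th power $G$. The Serre functor then descends: $\nu=\tau[1]$ commutes with $G$ (all are built from the commuting functors $\tau$ and $[1]$), so it induces an autoequivalence $\bar\nu$ of $\mathcal{C}^m_{n,p}$, and applying Serre duality summand by summand, $D\mathrm{Hom}_{\mathcal{D}}(X,G^iY)\cong\mathrm{Hom}_{\mathcal{D}}(G^iY,\nu X)\cong\mathrm{Hom}_{\mathcal{D}}(Y,G^{-i}\nu X)$, in the defining Hom-formula and reindexing the sum yields the bifunctorial isomorphism $D\mathrm{Hom}_{\mathcal{C}^m_{n,p}}(X,Y)\cong\mathrm{Hom}_{\mathcal{C}^m_{n,p}}(Y,\bar\nu X)$. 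No step of this uses $m=1$.

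For the remaining functor in (1), note that $\langle G\rangle=\langle(\tau^{-1}[m])^p\rangle$ is a subgroup of $\langle\tau^{-1}[m]\rangle$; equivalently, the induced functor $\bar F$ on $\mathcal{C}^m_{n,p}$ satisfies $\bar F^p\cong\mathrm{id}$, and $\mathcal{C}^m_{n,p}/\langle\bar F\rangle\cong\mathcal{C}^m_n$. Hence $\eta_p^m:\mathcal{C}^m_{n,p}\to\mathcal{C}^m_n$ is itself an orbit projection, so a triangle functor, and $\pi_1^m=\eta_p^m\circ\pi_p^m$, exactly as for $\eta_p$ before Lemma~\ref{ZhuProp: 3.3}. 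The fractionally CY dimension in (3) is then a one-line computation in the orbit category, where $G$ is identified with the identity: from $\tau^{-p}[pm]\cong\mathrm{id}$, i.e.\ $\tau^p\cong[pm]$, we get $\bar\nu^p=\tau^p[p]\cong[pm][p]=[p(m+1)]$, the required isomorphism of triangle functors $\nu^p\cong[p(m+1)]$ (irreducibility of the fraction $\tfrac{p(m+1)}{p}$ being the separate point noted in the introduction). For Krull--Schmidt I would first check Hom-finiteness: as the AR-quiver of $\mathcal{D}$ has shape $\Z A_n$ and $G$ translates indecomposables off to infinity, for fixed $X,Y$ only finitely many summands $\mathrm{Hom}_{\mathcal{D}}(X,G^iY)$ are nonzero, so the morphism spaces of $\mathcal{C}^m_{n,p}$ are finite dimensional; a Hom-finite orbit of a Krull--Schmidt category is again Krull--Schmidt.

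Finally, item (4) is the identification of a fundamental domain: the disjoint union $\bigcup_{i=1}^p\tilde{\mathcal{F}}_i$, with $\tilde{\mathcal{F}}_i=(\tau^{-1}[m])^{i-1}\tilde{\mathcal{F}}$, is a fundamental domain for $G=(\tau^{-1}[m])^p$, hence meets each $G$-orbit of indecomposables exactly once, giving $\mathrm{ind}(\mathcal{C}^m_{n,p})=\bigcup_{i=1}^p\tilde{\mathcal{F}}_i$. I expect the main obstacle to be item (2): the whole lemma rests on confirming that Keller's orbit-category theorem genuinely applies to $G=\tau^{-p}[pm]$, i.e.\ that $G$ is standard and of algebraic origin (cf.\ \cite[Section 9.3]{K2}) and that its orbits meet the heart finitely often, so that $\mathcal{D}/\langle G\rangle$ is triangulated. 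Once this is secured, items (1), (3) and (4) are formal and follow the $m=1$ proof verbatim.
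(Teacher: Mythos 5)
Your proposal is correct and follows essentially the same route as the paper: the paper likewise invokes Keller's theorem on triangulated orbit categories (together with the descent of the Serre functor, for which it cites \cite[Proposition 1.3]{BMRRT}) for items (1) and (2), notes that $(\tau[1])^p\cong[p(m+1)]$ by the same one-line computation for item (3), and identifies $\mathrm{ind}(\mathcal{C}^m_{n,p})$ with the fundamental domain $\bigcup_{i=1}^p\tilde{\mathcal{F}}_i$ as in \cite[Proposition 1.6]{BMRRT} for item (4). Your write-up merely makes explicit the reindexing of the Hom-sum for Serre duality and the Hom-finiteness needed for Krull--Schmidt, which the paper leaves to the cited references.
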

\begin{proof}
The first two claims follow from \cite[Theorem 1]{Keller} and \cite[Proposition 1.3]{BMRRT}.
Concerning the fractionally Calabi-Yau dimension, one computes
easily that $(\tau[1])^p\cong[p(m+1)]$ in $ \mathcal{C}^m_{n,p}$.
The last claim can be deduced from a similar reasoning as in \cite[Proposition 1.6]{BMRRT}.
\end{proof}

\subsection{ $\mathcal{C}^m_{n,p}$ via $m$-diagonals in $\Pi^p$}\label{subsecmdiaggeo}
In the following we present a geometric model for the category
$\mathcal{C}^m_{n,p}$ which arises as a generalization
of the one given for $\mathcal{C}_{n,p}$. 
Indecomposable objects are modelled 
by a subset of diagonals in $\Pi^p$
and irreducible morphisms will be associated to rotations of such modulo
the mesh relations.

Let $\Pi^p$ be a regular $p((n+1)m+1)$-gon, $m,n\in\N$.
Then we divide $\Pi^p$ into regions as we did in Section \ref{sectionreppolygon}
when $m=1$. Then each region $\Pi_k$ in $\Pi^p$ 
is homotopic to a regular $((n+1)m+2)$-gon. See also Figure \ref{polrep}.

For the convenience of the reader we recall 
that an {\em $m$-diagonal in a regular $((n+1)m+2)$-gon}, is a diagonal which
divides  the polygon into two parts, each having  $2$ vertices $\mathrm{mod }m$. 
\noindent
Observe that $m$-diagonals have been studied by
many authors, see \cite{BM}, \cite{BM2}, \cite{BuTo}, \cite{FoRe}, 
\cite{ps}, \cite{Thomas}, \cite{Tork}, \cite{Tzanaki}, $\dots$.
We will now adapt the notion of $m$-diagonals 
to the case at hand.

\begin{de} The {\em $m$-diagonals of $\Pi^p$} are 
given by the union of the $m$-diagonals in each 
region $\Pi_k$, for $1\leq k\leq p$.

\end{de}
\begin{nota} In the writing of a diagonal $(i,j,k)$ in $\Pi^p$, we consider the operations
on the first two indices modulo $m$, and the last one modulo $p$.
Remember that the last index specifies the region  $\Pi_k$ in $\Pi^p$. As every where else in the 
this paper we assume that $i<j.$
\end{nota}
We associate a quiver to the $m$-diagonals in $\Pi^p$ as follows.
\begin{de}\label{defmreparrows}
Let $\Gamma^m_{n,p}$ be the {\em quiver of $m$-diagonals of $\Pi^p$}.
Its vertices are the $m$-diagonals, and the arrows are given as follows. 
\begin{description}
\item[1.] If $j\neq N-(m-1)$, 
$$
{\xymatrix@-8mm{
 & & (i,j+m,k)\\
\ar[rru]\ar[rrd]  
(i,j,k)\\
 & & (i-m,j,k)
}}
$$
\item[2.]  If $j= N-(m-1)$ then $(i,N-(m-1),k)\rightarrow (i-m,N-(m-1),k)$.
\item[3.] Furthermore, $(i,N-(m-1),k)\rightarrow (1,i,k+1)$.
\end{description} 
\end{de}
Observe that the  description 1. and 2. 
corresponds to an irreducible $m$-rotations in $\Pi^p$ as defined
in \cite{BM}. That is an
irreducible clockwise rotations between $m$-diagonals around the vertex
$i$ or $j$ inside a region $\Pi_k$. We denote the set of these operations 
by $\mathrm{IrrRot}_m$.
The third type describes the composition of an $m$-irreducible
rotation inside $\Pi_k$ around the vertex
$i$ with the clockwise rotation $\rho(1,i,k)=(1,i,k+1)$. We denote the 
second type of operation by $\mathrm{Irr}\rho\mathrm{Rot}_m$.

Then we equip $\Gamma^m_{n,p}$ with a translation map.
\noindent
The first part of the definition describes an anticlockwise
rotation around the center of $\Pi_k$ through $\frac{2m\pi}{(n+1)m+2}$, the second
defines a composition of a rotation as in the first case, together with
with an anticlockwise rotation around the center of $\Pi^p$ through $\frac{2\pi}{p}$
\begin{de} \label{tau m}
The {\em translation $\tau_m$} maps
\begin{itemize}
\item $(i,j,k)$ to $(i-m,j-m,k)$ if $i,j\neq 1$.
\item If $i=1$,  
$\tau_m(1,j,k)=\rho^{-1}(1-m,j-m,k)=(1-m,j-m,k-1).$ 
\end{itemize}
\end{de}
The proof of the next result is straightforward. We use the fact
that the AR-quiver of $\mathcal{C}^m_{n}$
is isomorphic to the quiver of $m$-diagonals of $\Pi$,  as shown in 
\cite[Proposition 5.4]{BM}, combined with similar arguments as in the proof of 
Lemma \ref{stabletransquiv}

\begin{lemma}\label{transquivermdiag}
There is an isomorphism of stable translation quivers between the
AR-quiver of $\mathcal{C}^m_{n,p}$ and $\Gamma^m_{n,p}$.
\end{lemma}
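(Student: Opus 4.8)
The plan is to establish the isomorphism of stable translation quivers by leveraging the two building blocks already available: the modelling of the higher cluster category $\mathcal{C}^m_n$ by $m$-diagonals of a single polygon $\Pi$ (Baur--Marsh, \cite[Proposition 5.4]{BM}), and the technique used in the proof of Lemma \ref{stabletransquiv}, where the region-connecting arrows and translation were handled for the case $m=1$. First I would verify that $(\Gamma^m_{n,p},\tau_m)$ is itself a stable translation quiver; this parallels Lemma \ref{stabletransquiv} and amounts to checking that $\tau_m$ is a bijection and that there is an arrow $D\rightarrow D'$ if and only if there is an arrow $\tau_m D'\rightarrow D$. The verification splits into the same two cases as before: arrows internal to a region $\Pi_k$, which are governed by the arrows and translation of \cite[Proposition 5.4]{BM} applied region-wise, and the new connecting arrows of type $\mathrm{Irr}\rho\mathrm{Rot}_m$ crossing from $\Pi_k$ to $\Pi_{k+1}$, whose compatibility with $\tau_m$ I would check by an explicit index computation using Definitions \ref{defmreparrows} and \ref{tau m}.

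Next I would set up the candidate isomorphism $\psi$ on vertices. By the fourth part of the preceding Lemma, $\mathrm{ind}(\mathcal{C}^m_{n,p})=\bigcup_{i=1}^p(\tilde{\mathcal{F}}_i)$, so it suffices to specify $\psi$ on each fundamental domain $\tilde{\mathcal{F}}_i$ and match it to the $m$-diagonals of the corresponding region $\Pi_i$. On each $\tilde{\mathcal{F}}_i$ I would use the Baur--Marsh bijection between $\mathrm{ind}\mathcal{C}^m_n$ and the $m$-diagonals of $\Pi$, transported to $\Pi_i$ via the rotation $\rho^{i-1}$; this makes $\psi$ a bijection on objects since both sides decompose into $p$ copies of the same data. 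The assignment on objects should be, in the notation of Proposition \ref{bijvarphi}, the evident extension $X\mapsto (i,N-(m-1),k)$ and $Y\mapsto(1,i,k+1)$ on the boundary orbits.

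I would then check that $\psi$ sends arrows to arrows and intertwines the translations. Internal arrows and the internal part of $\tau_m$ are handled by \cite[Proposition 5.4]{BM} restricted to each $\Pi_i$, so the only genuinely new verification concerns the arrows of type $\mathrm{Irr}\rho\mathrm{Rot}_m$ and the boundary case $i=1$ of $\tau_m$. Here I would argue exactly as in Lemma \ref{stabletransquiv}: an irreducible morphism in $\mathcal{C}^m_{n,p}$ starting at an object of the form $(\tau^{-1}[m])^{i-1}\widetilde{P}[m]$ lands in the next fundamental domain $\tilde{\mathcal{F}}_{i+1}$, and the shape of the AR-quiver forces the target to be of the form $(\tau^{-1}[m])^i R$, matching the arrow $(i,N-(m-1),k)\rightarrow(1,i,k+1)$ of Definition \ref{defmreparrows}. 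The corresponding translation identity $\tau_m(1,j,k)=(1-m,j-m,k-1)$ then records exactly the composition of the internal $m$-rotation with $\rho^{-1}$, closing the stable-translation-quiver structure across regions.

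I expect the main obstacle to be the bookkeeping at the interface between consecutive regions, precisely where the two conventions interact: the indices $i,j$ are read modulo $m$ while $k$ is read modulo $p$, and the boundary diagonal $(i,N-(m-1),k)$ plays a dual role as both an $m$-rotation target inside $\Pi_k$ and the source of a connecting arrow into $\Pi_{k+1}$. The delicate point is to confirm that the translation $\tau_m$ and the arrow relations remain mutually consistent under the identification $N\equiv 1$ and that no spurious overlaps occur, which (as in the $m=1$ case, cf.\ Lemma \ref{stabletransquiv}) ultimately reduces to the explicit index arithmetic of Definitions \ref{defmreparrows} and \ref{tau m}. Once the quiver-level isomorphism and the translation-intertwining are established, the equality of mesh relations on both sides is automatic, so the claimed isomorphism of stable translation quivers follows.
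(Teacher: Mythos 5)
Your proposal is correct and follows essentially the same route as the paper: the paper's (very brief) proof likewise combines \cite[Proposition 5.4]{BM} applied region-wise with the arguments of Lemma \ref{stabletransquiv} for the connecting arrows and the boundary case of $\tau_m$. Your write-up simply makes explicit the bookkeeping that the paper leaves as ``straightforward.''
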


\subsection{Tilting theory for $\mathcal{C}^m_{n,p}$}
In this section we determine the $m$-cluster tilting
objects of $\mathcal{C}^m_{n,p}$  geometrically, we will see that
they correspond to $\rho$-orbits of $m$-angulations of a region
$\Pi_k$ in $\Pi_p$.
\begin{de} 
An object $T\in\mathcal\mathcal{C}^m_{n,p}$ is called {\em $m$-rigid}
if it is the direct sum of non isomorphic indecomposable objects 
$T_1,\dots, T_t$ such that $\mathrm{Ext}^i_{\mathcal{C}^m_{n,p}}(T_j,T_k)=0$ for all 
$1\leq i \leq m,$ and $1\leq j,k\leq t$. It is called {\em maximal  $m$-rigid} if
it is $m$-rigid and maximal with respect to this property. 

An {\em $m$-cluster tilting object} is an object $T$ in $\mathcal{C}^m_{n,p}$ which is $m$-rigid
and such that $X\in\mathrm{add} T$ if and only if $\mathrm{Ext}^i_{\mathcal{C}^m_{n,p}}(T,X)=0$
for $1\leq i \leq m$, and $X\in\mathrm{add} T$ if and only if $\mathrm{Ext}^i_{\mathcal{C}^m_{n,p}}(X,T)=0$
for $1\leq i \leq m$.
\end{de}
Observe that it was proven in \cite{Wraa} that in  $\mathcal{C}^m_{n}$,
$m$-cluster tilting and maximal $m$-rigid objects coincide.

\begin{de}
An {\em $m$-angulation  of $\Pi^p$} consists
of a maximal set of pairwise non crossing $m$-diagonals in $\Pi^p$. We denote such a set by 
$\mathcal{X}^m$.
\end{de}

For higher cluster categories the following result is a consequence
of \cite[Theorem 1]{Thomas} combined with \cite[Section 4.]{BM}.
\begin{lemma}\label{mang}
$m$-cluster tilting objects in $\mathcal{C}^m_{n} $ correspond to
$m$-angulations $\mathcal{X}^m$ of a regular $(n+1)m+2$-gon.
\end{lemma}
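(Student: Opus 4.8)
The plan is to prove the correspondence by transporting both sides through the geometric model of the $m$-cluster category and then invoking the two cited results. By \cite[Section 4]{BM} (equivalently, the $p=1$ specialization of Lemma \ref{transquivermdiag}) there is a bijection between $\mathrm{ind}(\mathcal{C}^m_n)$ and the set of $m$-diagonals of a regular $((n+1)m+2)$-gon $\Pi$, under which the AR-quiver of $\mathcal{C}^m_n$ is identified with the quiver of $m$-diagonals of $\Pi$. First I would record the dictionary that makes this bijection usable for tilting theory: for two indecomposables $X,Y$ with associated $m$-diagonals $D_X,D_Y$, one has $\mathrm{Ext}^i_{\mathcal{C}^m_n}(X,Y)=0$ for all $1\leq i\leq m$ if and only if $D_X$ and $D_Y$ do not cross in $\Pi$.

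I would deduce this dictionary from Serre duality. Since $\mathcal{C}^m_n$ is fractionally Calabi-Yau of dimension $\frac{m+1}{1}$, the higher extensions are morphisms into shifts, $\mathrm{Ext}^i_{\mathcal{C}^m_n}(X,Y)=\mathrm{Hom}_{\mathcal{C}^m_n}(X,Y[i])$, and the simultaneous vanishing of these for $1\leq i\leq m$ can be read off from the hammocks in the AR-quiver exactly as in the computation of Lemma \ref{exts}. Transporting the resulting support condition through the bijection of \cite{BM} yields precisely the non-crossing condition for $m$-diagonals. Since this is essentially the content of \cite[Section 4]{BM}, I would cite it rather than recompute the hammocks in detail.

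Next I would combine this with the notion of $m$-cluster tilting. By the result of \cite{Wraa} quoted above, $m$-cluster tilting objects in $\mathcal{C}^m_n$ coincide with maximal $m$-rigid objects. Under the bijection an $m$-rigid object $T=\bigoplus_j T_j$ corresponds to a set of pairwise non-crossing $m$-diagonals, and maximality of $T$ translates into maximality of this set, i.e.\ into an $m$-angulation $\mathcal{X}^m$ of $\Pi$. That maximal collections of pairwise non-crossing $m$-diagonals are exactly the facets of the generalized cluster complex, and that these are in bijection with the $m$-cluster tilting objects, is the content of \cite[Theorem 1]{Thomas}; invoking it finishes the correspondence. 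As a consistency check one gets that every $m$-angulation of the $((n+1)m+2)$-gon has exactly $n$ diagonals, matching the number of indecomposable summands of an $m$-cluster tilting object.

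The main obstacle is the dictionary between $\mathrm{Ext}$-vanishing and non-crossing in all degrees $1,\dots,m$ simultaneously: higher extensions correspond to morphisms into the higher shifts $Y[i]$, so keeping track of all of them requires the full hammock analysis in the $m$-cluster category. This is exactly where \cite[Section 4]{BM} does the real work, and once that correspondence is in hand the remainder of the argument reduces to the bookkeeping of translating maximal rigidity into maximal non-crossing, which follows formally from \cite{Wraa} and \cite[Theorem 1]{Thomas}.
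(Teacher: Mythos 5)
Your proposal is correct and follows essentially the same route as the paper, which establishes this lemma purely by citing \cite[Theorem 1]{Thomas} combined with \cite[Section 4]{BM}. The extra detail you supply --- the $\mathrm{Ext}$-vanishing/non-crossing dictionary via Serre duality and hammocks, and the identification of $m$-cluster tilting objects with maximal $m$-rigid objects from \cite{Wraa} --- is precisely the glue the paper leaves implicit, so there is no substantive difference in approach.
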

Thus, an $m$-cluster tilting object in $\mathcal{C}^m_{n}$ has $n$ non isomorphic
summands, see also \cite{Zhu2} and \cite{Wraa}.
The next useful result is due to A. Wr\aa{}lsen,\cite{Wraa}.
\begin{lemma}\label{wra}
Any $m$-cluster tilting object in $\mathcal{C}^m_{n}$ is induced
by a maximal $m$-rigid object in the fundamental domain of $\tau^{-1}[m]$
in $\mathcal{D}$.
\end{lemma}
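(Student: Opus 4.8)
The plan is to transfer maximal $m$-rigidity back and forth along the projection functor $\pi_p^m$, here specialized to $p=1$, i.e.\ $\pi:\mathcal{D}\to\mathcal{C}^m_n$, using the identification of $\mathrm{ind}(\mathcal{C}^m_n)$ with the fundamental domain $\tilde{\mathcal{F}}$ of $\tau^{-1}[m]$ in $\mathcal{D}$. First I would take an $m$-cluster tilting object $T=\bigoplus_l T_l$ in $\mathcal{C}^m_n$ and, by the Krull--Schmidt property together with $\mathrm{ind}(\mathcal{C}^m_n)\cong\tilde{\mathcal{F}}$, choose the unique representative $\tilde{T}_l\in\tilde{\mathcal{F}}$ of each indecomposable summand. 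Setting $\tilde{T}:=\bigoplus_l\tilde{T}_l$ then yields an object of $\mathcal{D}$ all of whose summands lie in $\tilde{\mathcal{F}}$ and which satisfies $\pi(\tilde{T})\cong T$ by construction.

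The heart of the argument is to compare $m$-rigidity in the two categories. Writing out the orbit-category formula gives, for summands $\tilde{T}_l,\tilde{T}_{l'}\in\tilde{\mathcal{F}}$ and $1\le i\le m$,
$$\mathrm{Ext}^i_{\mathcal{C}^m_n}(\tilde{T}_l,\tilde{T}_{l'})=\bigoplus_{j\in\Z}\mathrm{Hom}_{\mathcal{D}}\big(\tilde{T}_l,(\tau^{-1}[m])^j\tilde{T}_{l'}[i]\big).$$
I would then show that for objects in the fundamental domain and for shifts in the range $1\le i\le m$, every summand with $j\neq0$ vanishes, so that this sum reduces to its $j=0$ term $\mathrm{Ext}^i_{\mathcal{D}}(\tilde{T}_l,\tilde{T}_{l'})$. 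This is a computation inside $\mathcal{D}^b(\mathrm{mod}\,kA_n)$: one places $\tilde{\mathcal{F}}$ in the AR-quiver and uses the Hom-support (hammock) of each indecomposable of $\tilde{\mathcal{F}}$ to see that $(\tau^{-1}[m])^j$ with $j\neq0$ transports $\tilde{T}_{l'}[i]$ out of the region where nonzero morphisms from $\tilde{T}_l$ can occur, precisely because the width of $\tilde{\mathcal{F}}$ is matched to $\tau^{-1}[m]$. Granting this, $m$-rigidity of $T$ in $\mathcal{C}^m_n$ becomes equivalent to $m$-rigidity of $\tilde{T}$ in $\mathcal{D}$ with summands confined to $\tilde{\mathcal{F}}$.

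It then remains to match maximality. Since the summandwise assignment $\tilde{T}_l\leftrightarrow T_l$ is a bijection $\tilde{\mathcal{F}}\leftrightarrow\mathrm{ind}(\mathcal{C}^m_n)$ under which the vanishing conditions of the previous paragraph agree, adjoining a further indecomposable $\tilde{Z}\in\tilde{\mathcal{F}}$ to $\tilde{T}$ while preserving $m$-rigidity in $\mathcal{D}$ would, after applying $\pi$, enlarge $T$ while preserving $m$-rigidity in $\mathcal{C}^m_n$. As $T$ is $m$-cluster tilting, and $m$-cluster tilting objects coincide with maximal $m$-rigid objects in $\mathcal{C}^m_n$ by the result of \cite{Wraa} recalled above, no such $\tilde{Z}$ can exist. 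Hence $\tilde{T}$ is maximal $m$-rigid with summands in the fundamental domain, and $T=\pi(\tilde{T})$ is induced by it, as claimed.

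The main obstacle is the vanishing asserted in the second paragraph: controlling the infinitely many terms $j\neq0$ of the orbit-category $\mathrm{Ext}$-sum simultaneously for all shifts $1\le i\le m$. The cleanest route is to use the explicit coordinates for $\tau$ and $[1]$ on the AR-quiver in type $A$, together with the known Hom-vanishing outside the forward and backward hammocks, reducing the statement to an inequality comparing the coordinates of $\tilde{\mathcal{F}}$ with those of its $(\tau^{-1}[m])^j$-translates; this is exactly where the size $(n+1)m+2$ of the fundamental polygon enters.
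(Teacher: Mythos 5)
First, a remark on the comparison itself: the paper does not prove this lemma — it is imported wholesale from Wr\aa{}lsen \cite{Wraa} ("The next useful result is due to A. Wr\aa{}lsen") — so your argument has to stand entirely on its own. It does not, because its central step is false. You claim that for $\tilde T_l,\tilde T_{l'}$ in the fundamental domain $\tilde{\mathcal{F}}$ and $1\le i\le m$, every term with $j\neq 0$ in $\bigoplus_{j\in\Z}\mathrm{Hom}_{\mathcal{D}}\bigl(\tilde T_l,(\tau^{-1}[m])^j\tilde T_{l'}[i]\bigr)$ vanishes, and you propose a hammock computation to prove it; no such computation can succeed. The term at $j=-1$ is $\mathrm{Hom}_{\mathcal{D}}(\tilde T_l,\tau\tilde T_{l'}[i-m])$, which by Serre duality is isomorphic to $D\mathrm{Ext}^{m+1-i}_{\mathcal{D}}(\tilde T_{l'},\tilde T_l)$, and there is no reason for this to vanish. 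Already for $m=1$ and the quiver $1\rightarrow 2$ of type $A_2$ it fails: both simples $S_1,S_2$ lie in the fundamental domain and $\mathrm{Ext}^1_{\mathcal{D}}(S_2,S_1)=0$, yet $\mathrm{Ext}^1_{\mathcal{C}_2}(S_2,S_1)\cong D\mathrm{Ext}^1_{\mathcal{D}}(S_1,S_2)\neq 0$ because of the non-split sequence $0\to S_2\to P_1\to S_1\to 0$. This is just the familiar formula $\mathrm{Ext}^1_{\mathcal{C}_n}(X,Y)\cong\mathrm{Ext}^1_{\mathcal{D}}(X,Y)\oplus D\mathrm{Ext}^1_{\mathcal{D}}(Y,X)$ of \cite{BMRRT}; its higher analogue, for $X,Y\in\tilde{\mathcal{F}}$ and $1\le i\le m$, is $\mathrm{Ext}^i_{\mathcal{C}^m_n}(X,Y)\cong\mathrm{Ext}^i_{\mathcal{D}}(X,Y)\oplus D\mathrm{Ext}^{m+1-i}_{\mathcal{D}}(Y,X)$: exactly two orbit terms survive ($j=0$ and $j=-1$), not one.

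The error is repairable, and the architecture of your proof (lifting summands to $\tilde{\mathcal{F}}$, transferring rigidity, then transferring maximality through the projection) can be kept, but the dictionary must be corrected: rigidity transfers globally, not pair by pair. Requiring $\mathrm{Ext}^i_{\mathcal{C}^m_n}(T_l,T_{l'})=0$ for \emph{all ordered pairs} $(l,l')$ and \emph{all} $1\le i\le m$ is equivalent to requiring $\mathrm{Ext}^i_{\mathcal{D}}(\tilde T_l,\tilde T_{l'})=0$ for all ordered pairs and all such $i$, because the Serre-dual summand attached to the datum $(l,l',i)$ is exactly the direct summand attached to $(l',l,m+1-i)$, and $m+1-i$ again lies in $\{1,\dots,m\}$. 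With this substitution, your equivalence of $m$-rigidity holds, and the maximality argument — adjoining $\tilde Z\in\tilde{\mathcal{F}}$ to $\tilde T$ and projecting would enlarge the $m$-rigid object $T$, contradicting the coincidence of $m$-cluster tilting and maximal $m$-rigid objects in $\mathcal{C}^m_n$ recalled from \cite{Wraa} — goes through as you wrote it.
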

\begin{prop}\label{tiltinmrep}
$T$ is an $m$-cluster tilting object in $\mathcal{C}^m_{n,p}$ if and only if
$$
\mathcal{X}^m_T=\mathcal{X}^m\cup\rho(\mathcal{X}^m)\cup\dots\cup\rho^{p-1}(\mathcal{X}^m)
$$
where $\mathcal{X}^m$ is an $m$-angulation of the region $\Pi_1$.
\end{prop}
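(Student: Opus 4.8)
The plan is to mirror the structure of the proof of Proposition~\ref{tiltinbiggon}, replacing crossings of ordinary diagonals by crossings of $m$-diagonals and replacing Lemma~\ref{exts} by its higher analogue. First I would establish the geometric meaning of the vanishing of $\mathrm{Ext}^i_{\mathcal{C}^m_{n,p}}(X,Y)$ for $1\le i\le m$. Using Serre duality (the Serre functor is $\tau[1]$ by the preceding Lemma) together with the isomorphism of stable translation quivers from Lemma~\ref{transquivermdiag}, the support of the relevant $\mathrm{Hom}$-spaces is computed from the mesh relations of $\Gamma^m_{n,p}$ exactly as in Lemma~\ref{exts}. The outcome I expect is: two $m$-diagonals $D_X=(i,j,l)$ and $D_Y=(i',j',l')$ have nonvanishing higher extension precisely when they \emph{cross} in the sense appropriate to $m$-diagonals, and this can occur either inside a single region $\Pi_l=\Pi_{l'}$ or between adjacent regions, where $D_X\in\Pi_k$ and $D_Y\in\Pi_{k+1}$ contribute an extension if and only if $\rho^{-1}(D_Y)$ crosses $D_X$ inside $\Pi_k$ (equivalently $\rho(D_X)$ crosses $D_Y$ inside $\Pi_{k+1}$), with the index conditions inherited from $\tau_m$ in Definition~\ref{tau m}.

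With this crossing dictionary in hand, the argument splits into the two implications. For the ``if'' direction, suppose $\mathcal{X}^m$ is an $m$-angulation of $\Pi_1$; then by Lemma~\ref{mang} and Lemma~\ref{wra} it corresponds to an $m$-cluster tilting object $T'$ of $\mathcal{C}^m_n$ induced by a maximal $m$-rigid object in the fundamental domain $\tilde{\mathcal{F}}$. Applying the rotation $\rho$ and forming $\mathcal{X}^m_T=\bigcup_{i=0}^{p-1}\rho^i(\mathcal{X}^m)$, I would check that the configuration has no crossings: within each region there are none because $\mathcal{X}^m$ is an $m$-angulation, and between adjacent regions there are none precisely because $\rho^i(\mathcal{X}^m)$ and $\rho^{i+1}(\mathcal{X}^m)=\rho(\rho^i(\mathcal{X}^m))$ are related by $\rho$, so the between-region crossing test reduces to a within-region test for the $\rho$-invariant configuration, which is vacuous. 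Hence $\mathcal{X}^m_T$ gives an $m$-rigid object, and counting shows it is maximal: each region contributes $n$ summands by Lemma~\ref{mang}, giving $pn$ in total, which matches the count forced by maximality.

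For the ``only if'' direction, let $T$ be an $m$-cluster tilting object with diagonal set $\mathcal{X}^m_T$. Restricting to a single region $\Pi_k$, the within-region crossing condition forces $\mathcal{X}^m_T\cap\Pi_k$ to be a set of pairwise non-crossing $m$-diagonals, hence contained in an $m$-angulation of $\Pi_k$; maximality of $T$ and the summand count force equality, so each region carries a genuine $m$-angulation. The key point is then that the between-region crossing condition forces these $p$ $m$-angulations to be identical up to $\rho$: if the $m$-angulation in $\Pi_{k+1}$ were not $\rho$ applied to the one in $\Pi_k$, the crossing test between adjacent regions would produce a nonzero extension, contradicting $m$-rigidity; conversely the absence of between-region crossings leaves no room to enlarge a $\rho$-invariant configuration, confirming maximality. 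I expect the main obstacle to be the first step, namely pinning down the precise $m$-diagonal crossing conditions and the exact index inequalities governing the adjacent-region extensions, since the shift by $m$ and the modular identification of indices make the hammock computation in $\Gamma^m_{n,p}$ more delicate than in the $m=1$ case of Lemma~\ref{exts}; once that dictionary is correct, the combinatorial ``rigidity forces $\rho$-periodicity'' argument follows the $m=1$ pattern closely.
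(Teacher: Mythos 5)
Your ``if'' direction is essentially fine and close in spirit to the paper's (the paper proves rigidity by lifting a maximal $m$-rigid object to $\mathcal{D}$ via Lemma \ref{wra} and computing the orbit-category Ext-groups directly, while you argue through the crossing dictionary; both routes work). The genuine gap is in your ``only if'' direction, in the step ``if the $m$-angulation in $\Pi_{k+1}$ were not $\rho$ applied to the one in $\Pi_k$, the crossing test between adjacent regions would produce a nonzero extension.'' This local claim is false, precisely because the between-region extension condition is \emph{directed}: by Lemma \ref{exts} (and its $m$-analogue), for $p\geq 3$ the only extensions involving regions $\Pi_k$ and $\Pi_{k+1}$ are $\mathrm{Ext}^1(X,Y)$ with $X=(i,j,k+1)$, $Y=(i',j',k)$ and $i<i'<j<j'$; a crossing with the opposite orientation $i'<i<j'<j$ contributes nothing in either direction. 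Concretely, take $m=1$, $n=2$ (so $N=5$) and any $p\geq 3$: put the fan triangulation at vertex $1$, namely $\{(1,3,1),(1,4,1)\}$, in $\Pi_1$ and the fan at vertex $2$, namely $\{(2,4,2),(2,5,2)\}$, in $\Pi_2$. Any extension from a region-$2$ object to a region-$1$ object would need $i_X<i_Y$, i.e.\ $2<1$, and extensions the other way vanish for purely region-index reasons; so all Ext-groups among these four objects vanish although the two triangulations differ. Thus rigidity between adjacent regions does not force $\rho$-periodicity: the obstruction is global and appears only when one goes around the entire cycle of $p$ regions.

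This also undermines your supporting steps. The summand count $pn$ is the Corollary the paper deduces \emph{from} this Proposition, so it cannot be invoked here; worse, in $\mathcal{C}^m_{n,p}$ maximal $m$-rigid objects need not be $m$-cluster tilting, so no counting based on rigidity alone can replace the biconditional of Definition \ref{deftilting}. Indeed the example above extends: in $\mathcal{C}_{2,3}$ the object $T$ with diagonals $\{(1,3,1),(1,4,1),(2,4,2),(2,5,2)\}$ and nothing in $\Pi_3$ is maximal rigid (one checks that every diagonal placed in $\Pi_3$ creates a directed crossing with $\Pi_1$ or $\Pi_2$), yet it has only $4<pn=6$ summands and is not cluster tilting: for $X=(1,3,3)$ one has $\mathrm{Ext}^1(T,X)=0$ but $X\notin\mathrm{add}(T)$, since $\mathrm{Ext}^1(X,(2,4,2))\neq 0$. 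The paper's proof of this direction is structured differently to get around exactly this point: it compares two regions carrying different $m$-angulations by projecting both to the single polygon $\Pi$ via $\mu_p$, producing a crossing there and hence a nonzero $\mathrm{Ext}^i_{\mathcal{C}^m_n}$, and then transports this back to $\mathcal{C}^m_{n,p}$ by replacing one of the objects with a suitable representative in its $\tau^{-1}[m]$-orbit --- that is, it detects the failure in the quotient $\mathcal{C}^m_n$, where all relative positions of the two orbits are visible at once, rather than between two fixed adjacent regions. To repair your argument you need an ingredient of this global type (for instance, showing that the directed compatibility relation between $m$-angulations, composed cyclically around all $p$ regions, forces equality); the purely local adjacent-region test cannot deliver the conclusion.
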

\begin{proof}
First let $\mathcal{X}^m$ be an $m$-angulation
of a given region $\Pi_1$ in $\Pi^p.$ By Lemma \ref{mang} it corresponds
to a cluster tilting object $T_{\mathcal{C}^m_{n}}$ of $\mathcal{C}^m_{n}$. By Lemma \ref{wra} it
can be lifted to a maximal $m$-rigid object $T_{\mathcal{D}}$ of $\mathcal{D}$ contained in a 
fundamental domain of $\tau^{-1}[m]$.
Consider its $(\tau^{-1}[m])$-orbit $\{(\tau^{-1}[m])^i(T_{\mathcal{D}}) | i\in \Z \}$ 
in $\mathcal{D}$ which we denote again
by $T_{\mathcal{D}}$. Let $T_{\mathcal{C}^m_{n,p}}$
be the projection of $T_{\mathcal{D}}$ under the functor
$\pi^m_p:\mathcal{D}\rightarrow \mathcal{C}^m_{n,p}$.
Then  $T_{\mathcal{C}^m_{n,p}}$ is a direct sum of pairwise non isomorphic indecomposable objects,
which we denote by $T_1,\dots,T_t$ and we compute:

\begin{align*}
\mathrm{Ext}^i_{\mathcal{C}^m_{n,p}}(T_k,T_l)&=\mathrm{Hom}_{\mathcal{C}^m_{n,p}}(T_k,T_l[i])\\
&=\bigoplus_{j\in\Z}\mathrm{Hom}_{\mathcal{D}}(T_k,(\tau^{-p}[mp])^jT_l[i])\\
&=\bigoplus_{j\in\Z}\mathrm{Ext}^i_{\mathcal{D}}(T_k,(\tau^{-pj}[mpj])T_l)\\
&=0
\end{align*} for all $T_k, T_l$, $1\leq k,l\leq t$ and $1\leq i\leq m$.
In fact, the first and third equality hold by definition. The second follows
from the definition of morphisms in $\mathcal{C}^m_{n,p}$.
The last equality follows because if $T_{\mathcal{D}}$ is $m$-rigid then the same is true
for $(\tau^{-1}[m])(T_{\mathcal{D}})$, hence also for $(\tau^{-j}[mj])T_{\mathcal{D}}$.
And as $T_{\mathcal{D}}$ is maximal, it follows that in the fundamental domain of 
$(\tau^{-1}[m])$, $\mathrm{Ext}^i_{\mathcal{D}}(T_k,T_l)=0$ holds. Hence
$T_{\mathcal{C}^m_{n,p}}$  is $m$-rigid.

Now we show that $T_{\mathcal{C}^m_{n,p}}$ satisfies the second part of the definition
of $m$-cluster tilting objects in $\mathcal{C}^m_{n,p}$. For this we first
remark that the $\mathrm{Ext}$ spaces can be determined by similar computations as in 
Lemma \ref{exts}.
Secondly, by construction and Lemma \ref{mang} the objects $T_1,\dots,T_t$ of $\mathcal{C}^m_{n,p}$
correspond to a maximal set $\mathcal{X}^m_{T}$ of pairwise non crossing $m$-diagonals in $\Pi^p$. Furthermore, observe
that this set is given by the $\rho$-orbit of $\mathcal{X}^m$ in $\Pi^p$.
If  $T_{\mathcal{C}^m_{n,p}}$ was not $m$-cluster tilting, 
by the bijection given in Lemma \ref{transquivermdiag} 
there must be a region $\Pi_l$ in $\Pi^p$ where the set of $m$-diagonals is not maximal, 
but this contradicts the assumption that $\mathcal{X}^m$ was
an $m$-angulation. 
This proves one direction of the claim.

For the other direction, assume we have an $m$-angulation 
$\mathcal{X}^m_T$ of $\Pi^p$, and that $\mathcal{X}^m_T$
is not of the claimed form. 
Then $\mathcal{X}^m_T$ is such that there are regions $\Pi_k$ and $\Pi_l$ in $\Pi^p$
with $m$-angulations that are not obtained one from the other under $\rho$. 
Denote these two different $m$-angulations by $\mathcal{X}^m_{k}$ and $\mathcal{X}^m_{l}$ .
By Lemma \ref{mang} they correspond to $m$-cluster tilting objects in $\mathcal{C}^m_{n}$.

Consider the projection functor $\mu_p:\Pi^p\rightarrow\Pi$
sending a diagonal $D_X$ of  $\Pi^p$ to the corresponding diagonal $D_X$ in $\Pi$, and a morphism
between $D_X$ and $D_Y$ in $\Pi^p$ to the corresponding morphism
between $D_X$ and $D_Y$ in $\Pi$.
Then as  $\mu_p(\mathcal{X}^m_{k})\neq \mu_p(\mathcal{X}^m_{l})$
there are two $m$-diagonals, say $D_V$ and $D_W$ that cross in $\Pi$, 
because otherwise this would contradict the maximaliy of the $m$-angulation. Therefore, we conclude by
Lemma \ref{mang} that the corresponding objects $V$ and $W$
in $\mathcal{C}^m_{n}$ are such that $\mathrm{dim}(\mathrm{Ext}^i_{\mathcal{C}^m_{n}}(V,W))>0$ for an $1\leq i\leq m$.
Then, up to isomorphism, i.e. up to change the object $W$ with an other indecomposable object
in the same $\tau^{-1}[m]$-orbit in $\mathcal{D}$, 
it follows that also $\mathrm{dim}(\mathrm{Ext}^i_{\mathcal{C}^m_{n,p}}(V,W))>0$.
\noindent
This proves that the object in $\mathcal{C}^m_{n,p}$
corresponding to the diagonals $\mathcal{X}^m_T$ is not an $m$-cluster tilting
object.
\end{proof}
With this result we deduce immediately the following.
\begin{cor}
Any $m$-cluster tilting object in $\mathcal{C}^m_{n,p}$ contains $pn$ pairwise non isomorphic
summands.
\end{cor}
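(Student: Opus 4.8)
The plan is to read off the count directly from the structural description of $m$-cluster tilting objects already established in Proposition \ref{tiltinmrep}. That proposition tells us that every $m$-cluster tilting object $T$ of $\mathcal{C}^m_{n,p}$ corresponds to a set of $m$-diagonals of the form
$$
\mathcal{X}^m_T=\mathcal{X}^m\cup\rho(\mathcal{X}^m)\cup\dots\cup\rho^{p-1}(\mathcal{X}^m),
$$
where $\mathcal{X}^m$ is an $m$-angulation of the single region $\Pi_1$. So the entire problem reduces to counting the diagonals in such a union, after which the bijection between $m$-diagonals of $\Pi^p$ and indecomposable objects of $\mathcal{C}^m_{n,p}$ furnished by Lemma \ref{transquivermdiag} converts the diagonal count into a count of indecomposable summands.

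First I would determine the size of $\mathcal{X}^m$. Since $\Pi_1$ is homotopic to a regular $((n+1)m+2)$-gon, Lemma \ref{mang} identifies $m$-angulations of $\Pi_1$ with $m$-cluster tilting objects of $\mathcal{C}^m_n$; and by the remark following that lemma, such an object has exactly $n$ non-isomorphic indecomposable summands. Hence $\mathcal{X}^m$ consists of precisely $n$ distinct $m$-diagonals.

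Next I would verify that the $p$ translates $\rho^{i}(\mathcal{X}^m)$, for $0\leq i\leq p-1$, are pairwise disjoint, so that no cancellation occurs in the union. This is immediate from the construction in Section \ref{sectionreppolygon}: the rotation $\rho$ sends the region $\Pi_1$ to $\Pi_{i+1}$, and the regions $\Pi_1,\dots,\Pi_p$ are distinct subregions of $\Pi^p$, so each $\rho^{i}(\mathcal{X}^m)$ lives entirely in its own region $\Pi_{i+1}$. Therefore the union is disjoint and
$$
\#\,\mathcal{X}^m_T=\sum_{i=0}^{p-1}\#\,\rho^{i}(\mathcal{X}^m)=p\cdot n=pn.
$$

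Finally I would translate this back to summands of $T$. Under the bijection of Lemma \ref{transquivermdiag}, distinct $m$-diagonals of $\Pi^p$ correspond to pairwise non-isomorphic indecomposable objects of $\mathcal{C}^m_{n,p}$, so the $pn$ diagonals of $\mathcal{X}^m_T$ yield exactly $pn$ pairwise non-isomorphic indecomposable summands of $T$. I do not expect any genuine obstacle here: the only point requiring care is the disjointness of the $\rho$-orbit, which is geometrically transparent once one recalls that $\rho$ permutes the distinct regions $\Pi_k$, and this makes the corollary a direct numerical consequence of Proposition \ref{tiltinmrep} and Lemma \ref{mang}.
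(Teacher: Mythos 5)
Your proposal is correct and follows essentially the same route as the paper: the paper states this corollary as an immediate consequence of Proposition \ref{tiltinmrep}, with the count coming from the fact that each of the $p$ regions carries an $m$-angulation of a regular $((n+1)m+2)$-gon, which has $n$ diagonals (exactly as in the paper's proof of the analogous corollary for $\mathcal{C}_{n,p}$ in Section 5). Your added care about the disjointness of the $\rho$-translates and the bijection from Lemma \ref{transquivermdiag} simply makes explicit what the paper leaves implicit.
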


\section{Geometric model for $\mathcal{D}^b(\mathrm{mod}k A_{n+1})$}

Our strategy is to adapt the construction of
$\mathcal{C}^m_{n,p}$ given in the previous section. 
For this we will cut the large polygon $\Pi^p$ between two regions,
and let $p\rightarrow\infty$. In this way we obtain a figure with infinitely many sides
in which the idea is to no longer study $m$-diagonals
but their complements. More precisely, we consider the
diagonals which are not $2$-diagonals.  

Complements to $m$-diagonals in a given polygon $\Pi$
have been studied in \cite{Lisa1}. The result we need for the modelling of
$\mathcal{D}$ is Lemma \ref{Li}, it is based on the notion of the $m$-th power of a translation quiver introduced
in \cite{BM}. 

\subsection{ The $m$-th power of a translation quiver}
The {\em $m$-th power of $(\Gamma,\tau)$}, is a translation quiver $(\Gamma^m,\tau^m)$ which has
the same vertices as $\Gamma$, and whose arrows are given by paths of length $m$ in $\Gamma$:
$(x=x_0\rightarrow x_1\rightarrow\dots\rightarrow x_{m-1}\rightarrow x_m=y)$, 
such that whenever $\tau x_{i+1}$ is defined $\tau x_{i+1}\neq x_{i-1}$ for $i=1,\dots,m-1.$
Furthermore, the translation is given as $\tau^m:=\tau\circ\dots\circ\tau,$ $m$-times.
If $\Gamma$ is the AR-quiver of the cluster category of type $A_{(n+1)m-1}$, the $m$-th
power gives rise to $\Gamma^m_n$, the AR-quiver of the $m$-cluster category of type $A_n$, see \cite[Theorem 7.2]{BM}.

\begin{lemma}\label{Li} Let
$\Pi$ be a regular $(n+1)m+2$-gon and $\Gamma_{A_{m(n+1)-1}}$ the quiver of its diagonals. Then we have 
$$(\Gamma_{A_{m(n+1)-1}})^2=\Gamma_{n}^2\sqcup\Gamma_1\sqcup\Gamma_2,$$
where $\Gamma_1\cong\Gamma_2\cong AR(\mathcal{D}^b(\mathrm{mod} k A_{n+1})/[1])$, and
$[1]$ is the shift functor on $\mathcal{D}$.
\end{lemma}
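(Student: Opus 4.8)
The plan is to exhibit the three summands of the square directly by means of two parity invariants on the diagonals of $\Pi$, and then to identify each summand: the first through \cite[Theorem 7.2]{BM}, the remaining two through the study of complements of $2$-diagonals in \cite{Lisa1}.

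First I would fix notation: label the vertices of the regular $N$-gon $\Pi$, $N=(n+1)m+2$, cyclically by $\Z/N\Z$, write a diagonal as $(i,j)$ with $i<j$, and recall from the definition of an $m$-diagonal that $(i,j)$ is a $2$-diagonal exactly when each of the two sub-polygons it determines has an even number of vertices, that is, exactly when $j-i$ is odd. By definition the arrows of the square $(\Gamma_{A_{m(n+1)-1}})^2$ are the length-$2$ paths $x_0\to x_1\to x_2$ of $\Gamma_{A_{m(n+1)-1}}$ satisfying the non-backtracking condition $\tau x_2\neq x_0$. A short check against the rotation rules and mesh relations of \cite{CCS,BM} shows that the surviving arrows are precisely $(i,j)\to(i,j+2)$ and $(i,j)\to(i+2,j)$, while the translation of the square is $\tau^2\colon(i,j)\mapsto(i-2,j-2)$. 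Hence every arrow of the square and the translation $\tau^2$ preserve both $j-i\bmod 2$ and $i\bmod 2$.

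Next, the residue $j-i\bmod 2$ partitions the vertices into the $2$-diagonals and their complements, and by the previous paragraph no arrow of the square joins the two classes. On the $2$-diagonals the square is, by \cite[Theorem 7.2]{BM}, isomorphic as a translation quiver to $\Gamma_n^2$; this is the first summand. On the complementary class, $j-i$ is even, so the secondary invariant $i\bmod 2$ is also preserved by all arrows and by $\tau^2$, and the complement therefore splits as a disjoint union of two subquivers $\Gamma_1$ (say $i$ even) and $\Gamma_2$ ($i$ odd). The one-step rotation $r\colon(i,j)\mapsto(i+1,j+1)$ is a symmetry of $\Pi$, hence induces an automorphism of the square; since it interchanges the two parity classes, it yields $\Gamma_1\cong\Gamma_2$ as translation quivers, and it remains to analyse one of them.

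Finally I would identify $\Gamma_1$ with $AR(\D^b(\mathrm{mod}\,kA_{n+1})/[1])$. Introducing coordinates in which one endpoint of a complementary diagonal records a row $1\leq b\leq n+1$ and the other records position along the $\tau^2$-orbit, the two elementary moves $(i,j)\mapsto(i,j+2)$ and $(i,j)\mapsto(i+2,j)$ become the two families of arrows of $\Z A_{n+1}$, and $\tau^2$ becomes the Auslander--Reiten translation of $\D^b(\mathrm{mod}\,kA_{n+1})$; the boundary moves, which wrap once around $\Pi$, realize exactly the identification induced by the shift $[1]$. Matching meshes then gives the required isomorphism. I expect the main obstacle to lie precisely in this last step: one must verify that the wrap-around move corresponds to the action of $[1]$ on $\Z A_{n+1}$, and not to some other power of the Serre functor $\nu=\tau[1]$, and that the resulting quotient is the full $\Z A_{n+1}/[1]$ with exactly $n+1$ rows. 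This is the content imported from \cite{Lisa1}; comparing the vertex count of $\Gamma_n^2\sqcup\Gamma_1\sqcup\Gamma_2$ against the number $\tfrac{N(N-3)}{2}$ of diagonals of $\Pi$ then provides the final consistency check.
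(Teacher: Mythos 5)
Your proposal is correct, but it takes a genuinely different route from the paper, whose entire proof of this lemma is a one-line citation of Corollary 4.13 and Theorem 4.16 of \cite{Lisa1}. You instead argue directly: (i) the arrows of the square are exactly $(i,j)\to(i,j+2)$ and $(i,j)\to(i+2,j)$ (the paths through $(i+1,j+1)$ being killed by the non-backtracking condition, since $\tau(i+1,j+1)=(i,j)$), with translation $(i,j)\mapsto(i-2,j-2)$, so both endpoint parities are invariants; (ii) the parity of $j-i$ separates the $2$-diagonals, whose component is $\Gamma_n^2$ by \cite[Theorem 7.2]{BM}, from their complements; (iii) the residual invariant $i\bmod 2$ splits the complement into $\Gamma_1\sqcup\Gamma_2$, interchanged by the polygon rotation $(i,j)\mapsto(i+1,j+1)$; (iv) relabelling the even vertices by $\mathbb{Z}/(n+2)$ exhibits $\Gamma_1$ as the image of the covering $\Z A_{n+1}\to\Gamma_1$, $(p,b)\mapsto\{p,p+b\}$, whose deck transformation $(p,b)\mapsto(p+b,n+2-b)$ is precisely the action of $[1]$ on $\Z A_{n+1}$ for the linear orientation. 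I verified this covering computation, including the point you rightly flag as delicate (that the wrap-around identification is $[1]$ rather than some other power of $\nu=\tau[1]$), and your closing vertex count $n(n+2)+(n+1)(n+2)=(n+2)(2n+1)$ is consistent with the number of diagonals of $\Pi$. What your route buys is self-containedness: the paper's subsequent remark, that the vertices of $\Gamma_1$ and $\Gamma_2$ are exactly the diagonals with same-parity endpoints, becomes a visible step of the argument rather than a quoted consequence; what the citation buys is brevity and deferral of these checks to \cite{Lisa1}. One caveat, inherited from the paper's own formulation: the statement is only meaningful for $m=2$ (otherwise \cite[Theorem 7.2]{BM} does not yield $\Gamma_n^2$ from an $(n+1)m+2$-gon, nor does the parity dichotomy behave as needed), and your proof, like the paper's use of the lemma in Section 7, silently specializes to that case; it would be cleaner to state this explicitly at the outset.
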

\begin{proof}
This result was obtained from Corollary 4.13 and Theorem 4.16 in \cite{Lisa1}.  
\end{proof}
As a consequence of this result, one can say that
the vertices of
$\Gamma_{1}$ and $\Gamma_2$ are precisely the diagonals
whose endpoints are vertices of  $\Pi$  of the same parity.

\subsection{ The $\infty$-gon $\Pi^{\pm \infty}$}
In order to construct a polygon with infinitely many sides
we first introduce a polygon $\Pi^{\pm p}$ with $(2p+1)(2n+3)$ sides
and then let $p\rightarrow\infty$.

Let $\Pi_1$ be a $N:=(2(n+1)+2)$-gon and assume that its vertices
all lie on a line (c.f. Figure \ref{inftygon}).
Let $\varrho$ be the map shifting the figure $\Pi_1$,
$N$ steps to the right so that the left most vertex of $\varrho(\Pi_1)$
coincides with the right most vertex of $\Pi_1$.
Considering powers of $\varrho$ we obtain a figure with $2p+1$ regions homotopic to $\Pi_1$, each with vertices
numbered from $1$ up to $N$. Denote this figure
by $\Pi^{\pm p}$. 
The diagonals  in $\Pi^{\pm p}$
we will consider 
are inner arcs connecting even numbered vertices
in the same $\Pi_{k}$, $-p\leq k\leq p$. One could also have chosen the odd numbered vertices.
We call these diagonals {\em $2^c$-diagonals }.
Observe that the $2^c$-diagonals in any $\Pi_k$ give rise to a copy
of $\Gamma_1$ in Lemma \ref{Li}.
Letting $p\rightarrow \infty$ we obtain the $\infty$-gon $\Pi^{\pm \infty}$.

\subsection{The quiver $\Gamma^{\pm \infty}$}
We will define a translation quiver on the $2^c$-diagonals of $\Pi^{\pm \infty}$, for this 
we start associating a quiver to the $2^c$-diagonals of $\Pi^{\pm p}$. 
The arrows between $2^c$-diagonals
are defined in a similar way as in Section 6.
Let $(i,j,k)$ be a $2^c$-diagonal in a region $\Pi_k$ of $\Pi^{\pm p}$
with $j\neq N$, then  
$$
{
\xymatrix@-8mm{
 & & (1,j+2,k)\\
\ar[rru]\ar[rrd]  
(i,j,k)\\
 & & (i-2,j,k) 
}}
$$ 
where the first two entries are understood 
modulo $2$, and computations on the last entry have to be considered modulo $p$. 
Furthermore, for all $k\neq p$, we have arrows
$$
{
\xymatrix@-8mm{
 & &(2,i,k+1)\\
\ar[rru]\ar[rrd]  
(i,N,k)\\
 & & (i-2,N,k) 
}}
$$ 
where $(2,i,k+1)=\varrho(2,i,k)$. 
The condition on $k$ is needed because we want to
avoid that there is an arrow linking
the diagonals of the region $\Pi_p$ to those of the region $\Pi_{-p}$.
We will refer to the arrows $(i,N,k)\rightarrow(2,i,k+1)$, $k\neq p $ as {\em connecting arrows}.

Letting  $p\rightarrow \infty$ we obtain
infinitely many regions homotopic to $\Pi_1$, and we call the resulting figure $\Pi^{\pm \infty}$.
We denote the quiver of $2^c$-diagonals
of $\Pi^{\pm \infty}$ by $\Gamma^{\pm \infty}$.

\begin{figure}[htbp]
   \begin{center}
      \psfragscanon
\psfrag{Pi1}{$\Pi_1$}
\psfrag{Pi2}{$\Pi_2$}
\psfrag{Pi-2}{$\Pi_{-2}$}
\psfrag{varrho}{$\varrho$}
\psfrag{-varrho}{$\varrho^{-1}$}
\psfrag{dots}{$\dots$}
\psfrag{dots-}{$\dots$}
\centering
\includegraphics[width=0.7 \textwidth]{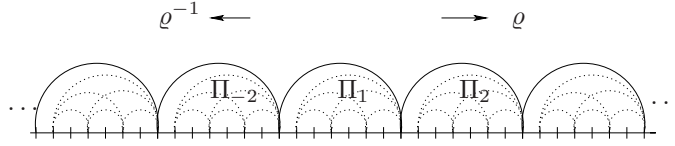}%
\qquad\qquad
\caption{ $\Pi^{\pm \infty}$ as model for $\mathcal{D}^b({\mathrm{mod}kA_3}).$}
\label{inftygon}
   \end{center}
\end{figure}

\subsection{Translation map for $\Gamma^{\infty}$}
We equip $\Gamma^{\pm\infty}$ with 
the translation $\tau_2$ from Definition
\ref{tau m} and let  $p$ tend to infinity.
More precisely, $\tau_2(i,j,k)=(i-2,j-2,k)$ for all $i,j$ different from $2$,
and $\tau_2(2,j,k)=\varrho^{-1}(N,j-2,k)=(j-2,N,k-1)$.
It can be proven as in
Lemma \ref{stabletransquiv} that this defines a stable translation quiver,
and taking the mesh category of it specifies morphisms for the
$2^c$-diagonals of $\Pi^{\pm \infty}$ arising from the mesh relations of 
$\Gamma^{\pm\infty}$.

\subsection{Geometric model of $\mathrm{mod} kA_{n+1}$ }

Observe that when restricting to the region $\Pi_1$ of $\Pi^{\pm \infty}$
we obtain the AR-quiver of the module category of type $A_{n+1}$ equipped with translation $\tau$.
In fact, write $\Gamma_{\Pi_1}$ for the full subquiver of
$\Gamma^{\pm \infty}$, with translation given by $\tau_2|_{\Gamma_{\Pi_1}}$, and assume
that the path algebra $k A_{n+1}$ is taken over an equioriented Dynkin quiver of type $A_{n+1}$.
\begin{lemma}\label{geomod} There is an isomorphism of stable
translation quivers
$$\Gamma_{\Pi_1}\cong AR(\mathrm{mod} kA_{n+1}).$$
\end{lemma}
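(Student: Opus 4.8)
The plan is to establish the isomorphism by exhibiting an explicit correspondence between the $2^c$-diagonals of $\Pi_1$ and the indecomposable objects of $\mathrm{mod}\,kA_{n+1}$, and then to check that both the quiver structure (arrows) and the translation match. First I would fix the equioriented quiver of type $A_{n+1}$ and recall that the indecomposable modules are indexed by intervals $[a,b]$ with $1\leq a\leq b\leq n+1$, so that the AR-quiver $AR(\mathrm{mod}\,kA_{n+1})$ has the familiar triangular shape with $\binom{n+2}{2}$ vertices. On the geometric side, $\Pi_1$ is a regular $N$-gon with $N=2(n+1)+2$ vertices, and the $2^c$-diagonals are exactly the inner arcs joining two even-numbered vertices; I would count these and verify the count equals $\binom{n+2}{2}$, which confirms the vertex sets can be put in bijection.

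Next I would invoke Lemma \ref{Li}: since the $2^c$-diagonals of any region $\Pi_k$ give a copy of $\Gamma_1$, and $\Gamma_1\cong AR(\mathcal{D}^b(\mathrm{mod}\,kA_{n+1})/[1])$, the $2^c$-diagonals of $\Pi_1$ already model the \emph{orbit} AR-quiver $AR(\mathcal{D}^b(\mathrm{mod}\,kA_{n+1})/[1])$. The key observation is that the module category $\mathrm{mod}\,kA_{n+1}$ embeds as the standard fundamental domain inside $\mathcal{D}^b(\mathrm{mod}\,kA_{n+1})$, and restricting $\Gamma^{\pm\infty}$ to the single region $\Gamma_{\Pi_1}$ (with the ambient translation $\tau_2|_{\Gamma_{\Pi_1}}$, which is now only \emph{partially} defined since the connecting arrows to $\Pi_2$ and $\Pi_0$ are cut) precisely realizes this fundamental domain rather than the full orbit. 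Concretely, I would show the even-labeled vertex $i$ with $2\leq i\leq N$ and $\tau_2(i,j,k)=(i-2,j-2,k)$ reproduces the AR-translate $\tau$ on modules, while the boundary cases $i=2$ (where $\tau_2$ would leave $\Pi_1$) correspond exactly to the projective modules on which $\tau$ is undefined in the module category.

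The concrete recipe I would write down is the bijection sending a $2^c$-diagonal $(2a,2b,1)$ of $\Pi_1$ (with $0\leq a<b\leq n+1$ after suitable indexing) to the interval module supported on $[a+1,b]$, and then verify directly against Definition \ref{defmreparrows} (specialized to $m=2$) that the two types of irreducible arrows $(i,j,1)\to(i,j+2,1)$ and $(i,j,1)\to(i-2,j,1)$ correspond to the two types of irreducible morphisms in $AR(\mathrm{mod}\,kA_{n+1})$, namely adding a vertex at the top or removing one at the bottom of the interval. The match of mesh relations is then automatic once the arrows and translation agree, so the stable-translation-quiver isomorphism follows. The main obstacle will be the careful bookkeeping at the boundary: I must confirm that the arrows and translation that are \emph{missing} in $\Gamma_{\Pi_1}$ (because they would connect to the neighboring regions $\Pi_0,\Pi_2$) are exactly the arrows and $\tau$-values that are absent in $AR(\mathrm{mod}\,kA_{n+1})$ as opposed to the derived category — i.e.\ the projectives have no incoming irreducible maps from outside and $\tau$ is undefined on them, and dually for injectives. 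Getting the parity conventions and the off-by-one in the vertex labeling to line up so that precisely the projective/injective boundary is reproduced is the delicate point; everything else is a routine comparison with the already-cited \cite[Proposition 5.4]{BM} and Lemma \ref{Li}.
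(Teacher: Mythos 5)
Your proposal is correct and follows essentially the same route as the paper: a count of vertices, an explicit bijection between the $2^c$-diagonals of $\Pi_1$ and the indecomposables of $\mathrm{mod}\,kA_{n+1}$ (the paper phrases this via Gabriel's theorem and positive roots, which coincides with your interval-module labelling), and then a check that arrows and translation agree. Your extra bookkeeping at the boundary (projectives/injectives, where $\tau_2$ would leave $\Pi_1$) is exactly the part the paper compresses into ``it is easy to check that it preserves the translation.''
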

\begin{proof}
By construction
$\Pi_1$ is homotopic to a regular $2(n+1)+2$-gon. Hence, the number of $2^c$-diagonals in $\Pi_1$
is $\frac{(n+2)(n+1)}{2}$. This number agrees with the isomorphism classes of
indecomposable objects in $\mathrm{mod} kA_{n+1}$.
Using Gabriel's correspondence between indecomposable objects in $\mathrm{mod} kA_{n+1}$
and positive roots of the associated Dynkin diagram we
associate positive roots to the $2^c$-diagonals of $\Pi_1$ and indecomposable modules to the positive roots.

Given a $2^c$-diagonal $(i,j,1)$ we 
let $\arrowvert i-j\arrowvert$ be its size. Then we
associate the $2^c$-diagonals of size $2$ to the simple roots $\alpha_i$, $1\leq i \leq n+1$ 
of the associated Dynkin diagram.
As the length of the segment augments, we increase the number of summands of the positive root.
Remark that the longest $2^c$-diagonal of $\Pi_1$ is $(2,2(n+1)+2)$, and it corresponds to $\alpha_1+\dots+\alpha_{n+1}$. 

Due to the shape of $\Gamma^{\textrm{even}}_{\Pi_1}$ and the well known shape of $AR(\mathrm{mod} kA_{n+1})$,
it is clear that the map we just defined
gives rise to an isomorphism of quivers, and it easy to check that
it preserves the translation so that it becomes an isomorphism of stable translation quivers.
\end{proof}

\subsection{Geometric model of the bounded derived category $\mathcal{D}$}

By taking the mesh category of $(\Gamma^{\pm\infty},\tau_2)$
we obtain a geometric model of $\mathcal{D}$.
\begin{thm}
There is an isomorphism of stable translation quivers
between the AR-quiver of $\mathcal{D}$ and
$(\Gamma^{\pm\infty},\tau_2)$.
\end{thm}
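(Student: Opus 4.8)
The plan is to prove the theorem by combining the local structure established for each region $\Pi_k$ with the connecting arrows that glue consecutive regions, and then passing to the limit $p\to\infty$. The AR-quiver of $\mathcal{D}=\mathcal{D}^b(\mathrm{mod}\,kA_{n+1})$ is the infinite $\Z A_{n+1}$ quiver with translation $\tau$; concretely, it consists of infinitely many copies of the AR-quiver of $\mathrm{mod}\,kA_{n+1}$, one for each shift $[i]$, glued together by the connecting morphisms coming from the triangle structure (the arrows $P[i]\to I[i]$-type connections between the module category in degree $i$ and the next). So the target is a doubly-infinite strip $n+1$ rows high. My job is to exhibit an isomorphism of stable translation quivers with $(\Gamma^{\pm\infty},\tau_2)$.

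First I would invoke Lemma \ref{geomod}, which already gives an isomorphism $\Gamma_{\Pi_1}\cong AR(\mathrm{mod}\,kA_{n+1})$ on a single region. Since $\varrho$ shifts $\Pi_1$ rigidly $N$ steps to the right and $\Pi_k=\varrho^{k-1}(\Pi_1)$, the restriction of $\Gamma^{\pm\infty}$ to each region $\Pi_k$ is isomorphic to $\Gamma_{\Pi_1}$, hence to a copy of $AR(\mathrm{mod}\,kA_{n+1})$. This gives a candidate bijection on vertices: the $2^c$-diagonal $(i,j,k)$ corresponds to the indecomposable object in $\mathcal{D}$ obtained by applying the $k$-th power of the relevant shift/connecting functor to the module attached to $(i,j,1)$ under Lemma \ref{geomod}. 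The arrows internal to a region are handled exactly by Lemma \ref{geomod}, so the only genuinely new content is matching the \emph{connecting arrows} $(i,N,k)\to(2,i,k+1)$ against the morphisms of $\mathcal{D}$ that link the module category in one degree to the next.

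The key step, therefore, is to verify that the connecting arrows reproduce the gluing of consecutive copies of $AR(\mathrm{mod}\,kA_{n+1})$ inside $AR(\mathcal{D})$. I would check that the $2^c$-diagonals $(i,N,k)$ with $j=N$ correspond precisely to the objects of the form (shifted) injectives — i.e.\ the objects sitting at the ``right edge'' of the region — while $(2,i,k+1)$ correspond to the (shifted) projectives at the ``left edge'' of the next region, and that the arrow $(i,N,k)\to(2,i,k+1)$ matches the irreducible morphism $I\to P[1]$ linking degree $k$ to degree $k+1$ in $\mathcal{D}$. Simultaneously I would check the translation: by the definition in Section 7.4, $\tau_2(2,j,k)=(j-2,N,k-1)$, which moves an object at the left edge of $\Pi_k$ to one at the right edge of $\Pi_{k-1}$, exactly as $\tau$ on $\mathcal{D}$ sends a projective in degree $k$ to an injective in degree $k-1$. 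Once both the arrow-matching and the $\tau$-matching at the region boundaries are confirmed, the mesh relations (hence the stable-translation-quiver structure) agree everywhere by construction, and $\varrho$-equivariance propagates the isomorphism across all $k$.

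The main obstacle I expect is the bookkeeping at the boundary: one must confirm that the removal of the condition $k\neq p$ in the limit $p\to\infty$ genuinely produces a doubly-infinite quiver with no spurious wrap-around identification, so that $\Pi^{\pm\infty}$ models the \emph{unbounded} derived category rather than an orbit category. Concretely, I would argue that in $\Pi^{\pm p}$ the only identification came from the forbidden arrow between $\Pi_p$ and $\Pi_{-p}$; since that arrow is explicitly excluded by the construction and the regions $\Pi_k$ are indexed by all of $\Z$ after the limit, the resulting quiver is precisely $\Z A_{n+1}$ with its full set of connecting arrows and no periodicity, matching $AR(\mathcal{D})$. The verification that $(\Gamma^{\pm\infty},\tau_2)$ is a stable translation quiver has already been asserted in Section 7.4 (by the same argument as Lemma \ref{stabletransquiv}), so I may take that for granted and concentrate the proof on identifying the two quivers vertex-by-vertex and arrow-by-arrow, with the edge analysis above as the crux.
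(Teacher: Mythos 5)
Your proposal follows essentially the same route as the paper's own proof: invoke Lemma \ref{geomod} on a single region, propagate copies of $\Gamma_{\Pi_1}$ by $\varrho$, and then identify the connecting arrows $(i,N,k)\to(2,i,k+1)$ with the arrows gluing consecutive copies of $AR(\mathrm{mod}\,kA_{n+1})$ inside $AR(\mathcal{D})$. Your version is in fact slightly more complete at the crux, since the edge analysis (right-edge diagonals as shifted injectives, left-edge diagonals as shifted projectives, the check that $\tau_2(2,j,k)=(j-2,N,k-1)$ mirrors $\tau$ sending a shifted projective to the injective of the previous degree, and the absence of wrap-around identification in the limit) makes explicit what the paper settles with the phrase ``by construction''.
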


\begin{proof} By Lemma \ref{geomod} we know that
$$(\Gamma_{\Pi_1},\tau_2|_{\Gamma_{\Pi_1}})\cong (AR(\mathrm{mod} kA_{n+1}),\tau).$$ 
Furthermore, we observe
that  the translation $\varrho$  reproduces
copies of $\Gamma_{\Pi_1}$ in $\Gamma^{\pm\infty}$. 
Thus, it remains to study the connecting arrows. In
$\Gamma^{\pm\infty}$ these are
the arrows between  $2^c$-diagonals of the form $(i,N,k)$ and $(2, i-2 ,k+1) $ lying in the 
different regions $\Pi_k$
and $\Pi_{k+1}$ inside $\Pi^{\pm \infty}$. By construction
these arrows are exactly the connecting arrows between
different copies of the  AR-quiver 
of $\mathrm{mod}kA_{n+1}$ in $AR(\mathcal{D})$.
\end{proof}
\begin{cor}
There is a morphism of functors between $\varrho$ in the
category of $2^c$-diagonals of $\Pi^{\pm \infty}$
and the shift functor $[1]$ on $\mathcal{D}$.
\end{cor}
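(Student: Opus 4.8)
The plan is to show that the isomorphism of stable translation quivers furnished by the preceding Theorem is compatible with the two self-maps $\varrho$ and $[1]$, and then to read off the asserted morphism at the level of mesh categories. Write $\Phi\colon AR(\mathcal{D})\xrightarrow{\sim}(\Gamma^{\pm\infty},\tau_2)$ for that isomorphism. Since $\mathcal{D}=\mathcal{D}^b(\mathrm{mod}kA_{n+1})$ is standard and of algebraic origin, it is equivalent to the mesh category of its AR-quiver; on the other hand $\mathcal{C}(\Pi^{\pm\infty})$ was \emph{defined} as the mesh category of $(\Gamma^{\pm\infty},\tau_2)$. Applying the mesh-category construction to $\Phi$ and composing with the standardness equivalence $\mathcal{D}\simeq\mathrm{Mesh}(AR(\mathcal{D}))$ therefore yields an equivalence of additive categories $G\colon\mathcal{C}(\Pi^{\pm\infty})\xrightarrow{\sim}\mathcal{D}$.

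Next I would analyse $\varrho$ and $[1]$ as automorphisms at the quiver level. After letting $p\to\infty$ the boundary restriction $k\neq p$ disappears, so $\varrho\colon(i,j,k)\mapsto(i,j,k+1)$ is a genuine automorphism of $\Gamma^{\pm\infty}$; a direct check against Definition \ref{tau m} shows that $\varrho\tau_2=\tau_2\varrho$ and that $\varrho$ carries connecting arrows to connecting arrows, so $\varrho$ commutes with the translation and descends to an endofunctor of $\mathcal{C}(\Pi^{\pm\infty})$. Dually, $[1]$ induces an automorphism of $AR(\mathcal{D})$ commuting with $\tau$ and sending the copy of $AR(\mathrm{mod}kA_{n+1})$ at shift-level $s$ to the copy at level $s+1$. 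By Lemma \ref{geomod} and the proof of the preceding Theorem these copies are identified under $\Phi$ with the regions $\Pi_k$, with the connecting arrows matched up. Since both $[1]$ and $\varrho$ advance the copy/region index by one and agree on the connecting arrows, I would conclude that $\Phi$ intertwines them, i.e.\ $\Phi\circ[1]=\varrho\circ\Phi$ as automorphisms of $(\Gamma^{\pm\infty},\tau_2)$.

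Finally I would pass back to categories. Because $\varrho$ and $[1]$ both arise from automorphisms of the respective stable translation quivers commuting with the translations, they act on the mesh categories, and the identity $\Phi\circ[1]=\varrho\circ\Phi$ yields a natural isomorphism $\eta\colon G\circ\varrho\;\Rightarrow\;[1]\circ G$. In the terminology of the definition of a morphism of triangle functors, this $\eta$ is exactly the required morphism of functors relating $\varrho$ to $[1]$. I expect the only genuine difficulty to lie in the first step---upgrading the combinatorial isomorphism $\Phi$ to an honest equivalence of $\mathcal{D}$ with the mesh category---which rests on the standardness of $\mathcal{D}$ in type $A$; once that is in place, verifying $\Phi\circ[1]=\varrho\circ\Phi$ is a bookkeeping matter of matching the connecting arrows produced in the proof of the preceding Theorem with the shift arrows of $AR(\mathcal{D})$.
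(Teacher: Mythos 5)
Your proposal is correct and follows essentially the same route as the paper: the paper's proof is a one-line appeal to the arguments in the preceding theorem, namely that $\varrho$ translates the regions $\Pi_k$ exactly as the connecting arrows between successive copies of $AR(\mathrm{mod}\,kA_{n+1})$ realize the shift in $AR(\mathcal{D})$, which is precisely the intertwining $\Phi\circ[1]=\varrho\circ\Phi$ you verify. Your additional step of upgrading the quiver isomorphism to an equivalence with the mesh category via standardness of $\mathcal{D}$, and then reading off the natural transformation $G\circ\varrho\Rightarrow[1]\circ G$, only makes explicit what the paper leaves implicit.
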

\begin{proof}
This follows directly from the arguments given in the proof of the previous result.
\end{proof}

\bibliographystyle{alpha}
\bibliography{referencesrep}

\end{document}